\newtheorem{thm}{Theorem}[section]
\newtheorem{prop}[thm]{Proposition}
\newtheorem{lemma}[thm]{Lemma}
\newtheorem{remark}[thm]{Remark}
\newtheorem{example}[thm]{Example}
\newtheorem*{dfn}{Definition}
\newcommand{\Z}{
	\mathbb{Z}
}
\newcommand{\R}{
	\mathbb{R}
}
\newcommand{\diam}{\mathrm{diam}}
\newcommand{\dgh}{d_\mathrm{GH}}
\newcommand{\thick}[1]{\mathrm{T}_{#1}}
\definecolor{darkblue}{rgb}{0.0, 0.0, 0.8}
\definecolor{darkred}{rgb}{0.8, 0.0, 0.0}
\definecolor{darkgreen}{rgb}{0.0, 0.8, 0.0}
\begin{document}
\title{The Distortion of the Reeb Quotient Map on Riemannian Manifolds\footnote{This work was partially supported by NSF grants IIS-1422400 and  CCF-1526513.}}

\author[1]{Facundo M\'emoli}
\author[2]{Osman Berat Okutan}
	\affil[1]{Department of Mathematics and Department of Computer Science and Engineering, The Ohio State University. \texttt{memoli@math.osu.edu}}
	\affil[2]{Department of Mathematics, The Ohio State University. \texttt{okutan.1@osu.edu}}
	
	\date{\today}
	\maketitle
    
\begin{abstract}
Given a metric space $X$ and a function $f: X \to \R$, the Reeb construction gives metric a space $X_f$ together with a quotient map $X \to X_f$. Under suitable conditions $X_f$ becomes a metric graph and can therefore be used as a graph approximation to $X$. The Gromov-Hausdorff distance from $X_f$ to $X$ is bounded by the half of the metric distortion of the quotient map. In this paper we consider the case where $X$ is a compact Riemannian manifold and $f$ is an excellent Morse function. In this case we provide bounds on the distortion of the quotient map which involve the first Betti number of the original space and a novel invariant which we call \emph{thickness}.
\end{abstract}

\tableofcontents

\section{Introduction}
    
    Every compact metric space can be approximated by finite metric spaces under the Gromov-Hausdorff distance \cite{bbi01}. Such an approximation can be considered as a simpler and faithful representation of the original space. However, if the original space is a length space, we may ask more structure in our approximation; for example we may require the approximating space to be a length space too. The simplest length spaces are metric graphs. It is known that every compact connected length space can be approximated by finite metric graphs \cite[Proposition 7.5.5]{bbi01}.
    
    A successful approximation method should provide us two pieces of information: A bound controlling the \textit{complexity} of the approximating space, and a bound for the \textit{faithfulness} of the approximation. For example, in the case of approximating compact metric spaces via finite ones, we may want to have control on the cardinality as a measure of complexity, and a control on the Gromov-Hausdorff distance to the original space as a measure of faithfulness. In this paper, as we consider approximation via metric graphs, we use the genus of the graph as the measure of complexity and use the Gromov-Hausdorff distance as the measure of faithfulness. 
    
    Although every length space can be approximated by metric graphs, the resulting graph can be quite complicated in the sense that it may contain a large number of vertices, edges and its genus can get quite large. One way of obtaining a tame graph from a space is the Reeb construction \cite{reeb}, which assigns a graph $X_f$ to a real valued function $f: X \to \R$. $X_f$ is tame in the sense that its genus is less than or equal to the first Betti number of $X$ and its vertices are obtained through the critical values of $f$. 
    
    The Reeb graph construction was introduced in \cite{reeb} to obtain a graph for studying the level sets of a Morse function.  Then the question becomes controlling the distance between a space and a Reeb graph obtained from it. This problem is considered in \cite{chs15} for a length space $X$ which is known to be close to a possibly complicated graph. In \cite{z06}, the case where the original space is a closed surface is studied. 
    
    In this paper, we generalize the result of \cite{z06} to arbitrary $n$-dimensional closed Riemannian manifolds. In order to do this we first need to introduce a new metric invariant $\thick{f}$ associated to any filtered metric space $f: X \to \R$ (see section \ref{invariant}) which we refer to $\thick{f}$ as the \emph{thickness} of $f:X\rightarrow \R$. This invariant gives a quantitative measure of how the volume of the level sets is distributed with respect to their diameters. This is less important in the case of dimension two since then the volume of a level set becomes its length which is always greater than or equal to the diameter. Our main result is the following:
    
\begin{thm}\label{main}\footnote{cf. \cite[Proposition 2.3]{z06}.}
    Let $X$ be a compact connected $n$-dimensional Riemannian manifold, $n \geq 2$ and $f: X \to \R$ be an $L$-Lipschitz excellent Morse function. Let $p \in X$ and $\epsilon_p:=||f - d(p,\cdot) ||_\infty$. Let $X_f$ be the Reeb graph of $f$ and $\pi: X \to X_f$ be the Reeb quotient map. Then the metric distortion of $\pi$ is less than or equal to
\begin{align*}&2(\beta_1(X)+1)^2\Bigg( \left( \frac{2^{n+1}\,L}{(\beta_1(X)+1)}\cdot \frac{\mu^n(X)}{\thick{f}} \right)^{\frac{1}{n}} + 16\bigg(\big(\diam(X)\big)^{\frac{1}{n}}\epsilon_p^{\frac{n-1}{n}}+\epsilon_p\bigg) \Bigg) 
+ |L-1|\,\diam(X),
\end{align*}
where $\mu^n$ denotes the $n$-dimensional Hausdorff measure on $X$.
\end{thm}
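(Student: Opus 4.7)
My plan is to bound the distortion $\mathrm{dis}(\pi)=\sup_{x,y\in X}\bigl|d_X(x,y)-d_{X_f}(\pi(x),\pi(y))\bigr|$ by separately estimating the two one-sided gaps. The easy direction is the upper bound on $d_{X_f}(\pi(x),\pi(y))$: any length-minimizing curve in $X$ from $x$ to $y$ projects to a path in $X_f$ whose intrinsic length equals the total variation of $f$ along the curve, which is at most $L\cdot d_X(x,y)$ since $f$ is $L$-Lipschitz. Therefore $d_{X_f}(\pi(x),\pi(y))-d_X(x,y)\le (L-1)\,d_X(x,y)\le |L-1|\,\diam(X)$, which already accounts for the final additive term of the theorem.

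For the harder direction I need, given $x,y\in X$, to construct a path in $X$ from $x$ to $y$ whose length is at most $d_{X_f}(\pi(x),\pi(y))$ plus the stated error. The plan is to fix a geodesic in $X_f$ from $\pi(x)$ to $\pi(y)$ and to lift it edge by edge. Since $\beta_1(X_f)\le \beta_1(X)$, such a geodesic decomposes into at most $\beta_1(X)+1$ monotone arcs along a spanning tree together with at most $\beta_1(X)$ loop-closing traversals; this is where one factor of $\beta_1(X)+1$ enters. Each arc of the geodesic lifts by following a gradient-like curve of $f$ across a range of values, contributing a length bounded in terms of the arc length and the Lipschitz constant $L$. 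Each \emph{transition} between adjacent arcs requires crossing one connected component of a level set $f^{-1}(t)$, and it is here that $\thick{f}$ is used to bound the required lateral displacement.

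The key lemma I expect to need is that, among the connected components of $f^{-1}(t)$ participating in a given transition, one can always be chosen whose diameter is at most $\bigl(2^{n+1}L\,\mu^n(X)/((\beta_1(X)+1)\,\thick{f})\bigr)^{1/n}$. The argument combines the co-area formula $\mu^n(X)=\int \mu^{n-1}(f^{-1}(t))\,dt/\|\nabla f\|$ with a Euclidean-type relationship between the $(n-1)$-volume of a connected component and its $(n-1)$-th diameter power; after pigeonholing over at most $\beta_1(X)+1$ edges, one extracts a component of the claimed diameter, and the exponent $1/n$ propagates to the lateral length of the lift. A second factor of $\beta_1(X)+1$ then arises because summing these lateral corrections across all transitions of the lifted path scales linearly in the number of arcs. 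The additive terms $\diam(X)^{1/n}\epsilon_p^{(n-1)/n}$ and $\epsilon_p$ come from the alternative diameter estimate obtained by approximating level sets of $f$ by spheres around $p$: since $\|f-d(p,\cdot)\|_\infty=\epsilon_p$, the symmetric difference between an $f$-level set and a sphere centered at $p$ has controlled $(n-1)$-volume, and an isoperimetric trade-off between this estimate and the thickness estimate yields the two $\epsilon_p$-terms.

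The main obstacle will be the transition step: proving the existence of a thick level-set component of the claimed small diameter. This requires carefully matching the scale-free co-area identity with the intrinsic isoperimetric behavior of level sets near both critical and regular values of the Morse function $f$, and ensuring that the chosen components can be reached by the gradient-like lift without the lateral corrections compounding unfavorably across the $\beta_1(X)+1$ arcs. Assembling all contributions, together with the $|L-1|\,\diam(X)$ term from the easy direction, produces the bound in the statement.
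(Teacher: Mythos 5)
Your treatment of the easy direction matches the paper exactly: $d_{X_f}(\pi(x),\pi(y)) - d_X(x,y) \le (L-1)\,d_X(x,y)$ is precisely how the $|L-1|\diam(X)$ term appears.

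The hard direction, however, has a genuine gap at its foundation, and the route you sketch diverges from the paper in a way that I do not think can be repaired as stated. You propose to lift the Reeb geodesic into $X$ by following gradient-like curves, claiming the length of each lifted arc is ``bounded in terms of the arc length and the Lipschitz constant $L$.'' But $L$-Lipschitzness of $f$ only gives $\|\nabla f\| \le L$, which yields a \emph{lower} bound on the length of a gradient-like curve spanning a given range of $f$-values ($\mathrm{length} \ge h/L$), not an upper bound. When $\|\nabla f\|$ is small the gradient flow moves slowly in the $f$-direction while covering large distances in $X$, so the lift can be arbitrarily longer than the Reeb geodesic. The paper sidesteps this entirely: rather than lift the Reeb geodesic, it compares $d(x,y)$ to $d_f(\pi(x),\pi(y))$ via length-minimizing geodesics to the fixed basepoint $p$ (Propositions \ref{comparison1} and \ref{comparison2}). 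The inequality $\|f - d(p,\cdot)\|_\infty = \epsilon_p$ translates $f$-value differences (which control Reeb-graph distances) into genuine distance differences along geodesics through $p$, at the cost of a $2\epsilon_p$ or $4\epsilon_p$ additive error per step. Your alternative plan for $\epsilon_p$ — comparing level sets to spheres by symmetric difference and an isoperimetric trade-off — plays no role in the paper and is, as stated, too vague to assess.

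Two further pieces of the paper's argument are absent from your plan and carry essential weight. First, the ``key lemma'' you expect — that one can \emph{choose} a level-set component of diameter at most $\bigl(2^{n+1}L\,\mu^n(X)/((\beta_1(X)+1)\thick{f})\bigr)^{1/n}$ via a pigeonhole over $\beta_1(X)+1$ edges — is not what is needed, since the component crossed at a transition is fixed by the path, not chosen. The paper's Proposition \ref{levelMain} instead bounds the diameter of \emph{every} level-set component, with the opposite dependence $(\beta_1(X)+1)^{(n-1)/n}$ under the root, and the second $(\beta_1(X)+1)$ factor in the theorem comes not from a pigeonhole over edges but from a topological separation result (Proposition \ref{seperate}, built on Lemma \ref{codim1} and the Thom isomorphism): at most $\beta_1(X)+1$ components of a lower level set separate a given component from $p$. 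Second, the exponent $1/n$ rather than $1/(n-1)$ arises from Lemma \ref{diamSum} together with integrating the coarea estimate over a height window of length $\delta \sim \diam(A)/(\beta_1(X)+1)$; you gesture at this (``the exponent $1/n$ propagates'') but do not supply the mechanism, and without the separation lemma the window argument does not get off the ground.
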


Note that if we replace $\epsilon_p$ in the statement by $\epsilon_p:=\inf_{c\in\R}\|f-d(p,\cdot)+c\|_\infty$ (which is a smaller quantity) the theorem is still correct. Indeed, if we take $g=f+c$ where $c$ is the constant realizing $\epsilon_p$, then $X \to X_f$ coincides with $X \to X_g$ as maps of metric spaces.

At first sight it may appear that our upper bound is not tight in the case where $f$ is constant, since in that case the actual distortion of any such map is $\diam(X)$. Indeed, note that since when $L=0$ we have $\epsilon_p\leq \diam(X)$, our bound becomes $\diam(X)\cdot \big(64(\beta_1(X)+1)^2+1\big)$. However, note that the constant function is not Morse and therefore our analysis does not apply to it.

    Let us interpret this upper bound as follows. By distributing the product over the sum and combining topological constants depending on the  first Betti number of $X$ properly as $C(X)$, $D(X)$, and $E(X)$, the bound can now be written as
    \[\underbrace{C(X)\,L^\frac{1}{n}\,\left(\frac{\mu^n(X)}{\thick{f}}\right)^{\frac{1}{n}}}_{\mathrm{(I)}} + \underbrace{\big(D(X)\,\big(\diam(X)\big)^{1/n}\,\epsilon_p^{(n-1)/n}+ E(X)\,\epsilon_p\big)}_{\mathrm{(II)}} + \underbrace{\diam(X)\,|L-1|}_{\mathrm{(III)}}, \]
    which consists of three parts. The last part $\mathrm{(III)}$ shows that we prefer values of the Lipschitz constant $L$ to be close to $1$, since otherwise the distortion becomes close to $\diam(X)$, which is the worst case distortion which we already know. The second term $\mathrm{(II)}$ shows that  smaller values of $\epsilon_p$ are favored in order to obtain a smaller upper bound. These two observations can be combined to say that the last two terms of our upper bound become smaller for \textit{distance-like} functions, where $|L-1|+\epsilon_p$ is considered as a first degree \textit{Sobolev type} distance of $f$ to $d(p,\cdot): X \to \R$. Note that by Proposition \ref{approx}, the distance function $d(p,\cdot): X \to \R$ can be approximated (in terms of $|L-1|+\epsilon_p$) by excellent Morse functions.  
  
Now let us look at the first term $\mathrm{(I)}$ of the bound:  $C(X)\,L^\frac{1}{n}\,\left(\frac{\mu^n(X)}{\thick{f}}\right)^{\frac{1}{n}}$. Because of its effect on the third part of the bound, we already mentioned that $L$ is preferred to be around $1$. The first part shows that smaller total volume guarantees a better bound, as is expected from the situation arising from considering the boundary of a tubular neighborhood around an embedded metric graph. In addition, it shows that small values of $\thick{f}$ cause a problem. In the case where $X$ is a surface, $\thick{f}\geq 1$ by Remark \ref{surface}, but we do not have such a lower bound for arbitrary dimensions. Hence, dimensions greater than $2$ require extra care in the selection of $f$: In addition to being distance like, we also want control on $\thick{f}$. In Section \ref{sec:calc}, we construct rich families where we have such control.

Theorem \ref{main} follows directly from Proposition \ref{distortion} and Proposition \ref{levelMain}.

In section \ref{invariant} we see that for $n=2$ it holds that  $\thick{f}$ is always greater than or equal to $1$. Then, since the distance function can be approximated with Morse functions nicely (see section \ref{sec:dt}), in 
the case $n=2$ Theorem \ref{main} gives us that the distortion of the Reeb quotient map is bounded above by $4\sqrt{2} (\beta_1(X)+1)^{3/2} \cdot \big(\mathrm{area}(X)\big)^{1/2}$,  which is the main result of \cite{z06}.

    The novelty in our paper is twofold. Firstly, we extend some topological results from \cite{z06} which are easy in dimension two to any dimension $n \geq 2$, where much heavier use of differential topology is required. Secondly, we introduce a new metric invariant $\thick{f}$ which does not appear in dimension two since it is bounded from below by 1 in that case. We study this invariant for a class of metric spaces which we call thickened graphs.
    
\section{Preliminary results}

In this section we collect some results about metric spaces and differential topology that we invoke in the proofs of our main statements.
\subsection{Metric Spaces}

\begin{lemma}\label{setCover}
Let $\mathcal{S}$ be a family of sets. Then the following are equivalent.
\begin{enumerate}[i)]
\item For each $S,S'$ in $\mathcal{S}$, there exists $S=S_0,\cdots,S_n=S'$ such that $S_i \cap S_{i-1} \neq \emptyset$ for each $i=1,\dots,n$.

\item For each non trivial partition $\mathcal{S}=\mathcal{T} \amalg \mathcal{T'}$ we have \[ \Bigg(\bigcup_ {T \in \mathcal{T}} T\Bigg) \bigcap \Bigg(\bigcup_{T' \in \mathcal{T'}} T'\Bigg) \neq \emptyset.  \]
\end{enumerate}
\end{lemma}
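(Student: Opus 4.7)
The plan is to prove the equivalence by standard connectivity arguments on the \emph{intersection graph} of $\mathcal{S}$, i.e.\ the (simple) graph whose vertex set is $\mathcal{S}$ with an edge between $S$ and $S'$ precisely when $S\cap S'\neq\emptyset$. Condition (i) simply asserts that this graph is connected, and condition (ii) asserts that no non-trivial vertex-bipartition is edge-free. So the lemma boils down to the familiar graph-theoretic fact ``connected iff no non-trivial disconnection,'' and I would just transcribe this observation in the language of families of sets.

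For the implication (i) $\Rightarrow$ (ii), I would fix a non-trivial partition $\mathcal{S} = \mathcal{T}\amalg\mathcal{T}'$, choose any $S\in\mathcal{T}$ and $S'\in\mathcal{T}'$, and invoke (i) to obtain a chain $S=S_0,\dots,S_n=S'$ with consecutive sets meeting. Letting $i$ be the least index with $S_i\in\mathcal{T}'$ (which exists since $S_n\in\mathcal{T}'$), the pair $(S_{i-1},S_i)$ straddles the partition and $S_{i-1}\cap S_i\neq\emptyset$, producing a point of $\bigl(\bigcup_{T\in\mathcal{T}} T\bigr)\cap\bigl(\bigcup_{T'\in\mathcal{T}'}T'\bigr)$.

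For (ii) $\Rightarrow$ (i), I would argue by contrapositive. Fix $S_0\in\mathcal{S}$ and let $\mathcal{T}$ be the set of all $S\in\mathcal{S}$ reachable from $S_0$ by a chain of consecutively intersecting members of $\mathcal{S}$. If (i) fails, $\mathcal{T}':=\mathcal{S}\setminus\mathcal{T}$ is non-empty, and $S_0\in\mathcal{T}$ makes the partition non-trivial. The key point is that no $T\in\mathcal{T}$ can meet any $T'\in\mathcal{T}'$: otherwise, appending $T'$ to a chain from $S_0$ to $T$ would exhibit $T'\in\mathcal{T}$, a contradiction. Hence the two unions are disjoint, violating (ii).

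There is no real obstacle here; the only mildly delicate points are making sure the partition constructed in the second half is non-trivial (which is why one defines $\mathcal{T}$ as the connected component of a chosen $S_0$), and being careful that in the first half the indices $i$ and $i-1$ both lie in the valid range, which is automatic since $S_0\in\mathcal{T}$ forces $i\geq 1$. Both directions are then a couple of lines, and the lemma serves mainly as a convenient reformulation to be applied later in the paper.
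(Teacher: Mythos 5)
Your proposal is correct and takes essentially the same approach as the paper: for (i) $\Rightarrow$ (ii) both arguments locate a consecutive pair in the chain straddling the partition (you take the least index landing in $\mathcal{T}'$, the paper the greatest index staying in $\mathcal{T}$), and for (ii) $\Rightarrow$ (i) both build the reachability component of a chosen set and show its complement must be empty. Casting it as connectivity of the intersection graph is a clean way to phrase the same idea.
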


\begin{proof}
$``i) \implies ii)"$ Take $S \in \mathcal{T}$ and $S' \in \mathcal{T'}$. Let $S_0,\dots,S_n$ be as in $i)$. Let $k$ be the maximal index such that $S_k \in \mathcal{T}$. Note that $k \neq n$ and $S_{k+1} \in \mathcal{T'}$. Then the intersection in $ii)$ contains $S_k \cap S_{k+1}$ which is nonempty.

$``ii) \implies i) "$ Let $\mathcal{S}_0:=\{ S \}$ and inductively define $$\mathcal{S}_{i+1}:=\{ T \in \mathcal{S}: T \cap (\cup_ {T' \in S_{i}} T') \neq \emptyset \}.$$ 
Let $\mathcal{T}=\cup_i\mathcal{S}_i$ and $\mathcal{T}'$ be its complement. The intersection in $ii)$ becomes empty by  definition of $\mathcal{S}_i$. Hence $\mathcal{T'}$ is empty and  $\mathcal{T}=\mathcal{S}$. Note that for $i \geq 1$, for each $T \in \mathcal{S}_i$. there exists a $T' \in \mathcal{S}_{i-1}$ such that $T \cap T' \neq \emptyset$. Since $S'$ is in $\mathcal{S}_i$ for some $i$, by using the property mentioned above we can construct a chain of sets as in $i)$.  
\end{proof}

\begin{lemma}\label{chainDiam}
Let $\mathcal{S}$ be a collection of subsets of a metric space as in Lemma \ref{setCover} $i)$. Then \[ \diam\Bigg(\bigcup_{S \in \mathcal{S}} S\Bigg) \leq \sum_{S \in \mathcal{S} } \diam(S). \]
\end{lemma}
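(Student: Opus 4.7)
The plan is to bound the distance between any two points $x,y \in \bigcup_{S \in \mathcal{S}} S$ by the claimed sum, which then gives the desired bound on the diameter by taking a supremum.

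First I would pick $S, S' \in \mathcal{S}$ with $x \in S$ and $y \in S'$, and invoke property $i)$ of Lemma \ref{setCover} to obtain a chain $S = S_0, S_1, \ldots, S_n = S'$ with $S_{i-1} \cap S_i \neq \emptyset$ for each $i=1,\ldots,n$. Before using this chain, I would shorten it to eliminate repetitions: whenever $S_i = S_j$ with $i < j$, the subsequence $S_0, \ldots, S_i, S_{j+1}, \ldots, S_n$ is still a valid chain, because $S_j \cap S_{j+1} = S_i \cap S_{j+1} \neq \emptyset$. Iterating yields a chain whose terms are pairwise distinct elements of $\mathcal{S}$.

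Next, I would choose witnesses $x_i \in S_{i-1} \cap S_i$ for $i=1,\ldots,n$, and set $x_0 := x$, $x_{n+1} := y$. For each $i = 0, \ldots, n$, the points $x_i$ and $x_{i+1}$ both lie in $S_i$, so $d(x_i, x_{i+1}) \leq \diam(S_i)$. The triangle inequality then gives
\[
d(x,y) \;\leq\; \sum_{i=0}^{n} d(x_i, x_{i+1}) \;\leq\; \sum_{i=0}^{n} \diam(S_i).
\]
Since the $S_i$'s are distinct members of $\mathcal{S}$ and $\diam(T) \geq 0$ for every $T \in \mathcal{S}$, the last sum is bounded above by $\sum_{S \in \mathcal{S}} \diam(S)$.

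Taking the supremum over all $x, y \in \bigcup_{S \in \mathcal{S}} S$ yields the inequality. There is no real obstacle here; the only subtle point is ensuring that the chain produced by Lemma \ref{setCover} has no repeated sets so that the telescoping sum of diameters is dominated by the unordered sum over $\mathcal{S}$, which is handled by the shortcutting step above.
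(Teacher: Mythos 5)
Your proof follows the same core strategy as the paper's: take a chain $S=S_0,\dots,S_n=S'$ from Lemma \ref{setCover}(i), pick witnesses $x_i\in S_{i-1}\cap S_i$, and telescope via the triangle inequality. The one thing you do that the paper does not is the shortcutting step to make the $S_i$ pairwise distinct; this is in fact necessary, since the paper stops at $d(x,y)\le\sum_{i=0}^n \diam(S_i)$ and tacitly assumes this is dominated by $\sum_{S\in\mathcal S}\diam(S)$, which only follows when no $S_i$ repeats. Your version closes that small gap, so it is a slightly more careful rendering of the same argument.
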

\begin{proof}
Let $x \in S, y \in S'$ for some $S,S' \in \mathcal{S}$. Let $S=S_0,\dots,S_n=S'$ be as in Lemma \ref{setCover} $i)$. Let $x_i \in S_i \cap S_{i-1}$ for $i=1,\dots,n$. Let $x_0=x,x_{n+1}=y$. Then we have
\[d(x,y)=d(x_0,x_{n+1}) \leq \sum_{i=0}^n d(x_i,x_{i+1}) \leq \sum_{i=0}^n \diam(S_i).  \]
\end{proof}

\begin{prop}\label{closedCover}
Let $X$ be a connected metric space and $\mathcal{A}$ be a finite closed cover of $X$. Then 
\[\diam(X) \leq \sum_{A \in \mathcal{A}} \diam(A). \]
\end{prop}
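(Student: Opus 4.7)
The plan is to apply Lemma \ref{chainDiam} directly to the cover $\mathcal{A}$. Since $\bigcup_{A\in\mathcal{A}}A=X$, once we verify that $\mathcal{A}$ satisfies the chain condition (i) of Lemma \ref{setCover}, we obtain $\diam(X)=\diam\bigl(\bigcup_{A\in\mathcal{A}}A\bigr)\le\sum_{A\in\mathcal{A}}\diam(A)$ immediately.

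To verify the chain condition, I would instead verify the equivalent condition (ii) of Lemma \ref{setCover}, which is better suited to using connectedness. Suppose, for contradiction, that there is a nontrivial partition $\mathcal{A}=\mathcal{T}\amalg\mathcal{T}'$ with
\[
U:=\bigcup_{T\in\mathcal{T}}T,\qquad V:=\bigcup_{T'\in\mathcal{T}'}T',\qquad U\cap V=\emptyset.
\]
Then $U$ and $V$ are nonempty (because the partition is nontrivial), they cover $X$ (because $\mathcal{A}$ covers $X$), and they are disjoint. Moreover, since $\mathcal{A}$ is \emph{finite} and consists of \emph{closed} sets, each of $U$ and $V$ is a finite union of closed sets, hence closed. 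This exhibits $X$ as a disjoint union of two nonempty closed sets, contradicting the connectedness of $X$. Hence condition (ii) of Lemma \ref{setCover} holds, so by that lemma condition (i) holds as well.

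There is essentially no obstacle here; the only point worth flagging is that finiteness of $\mathcal{A}$ is essential in the argument above, since an arbitrary union of closed sets need not be closed, and this is precisely where the hypothesis is used. Once condition (i) is established, Lemma \ref{chainDiam} applied with $\mathcal{S}=\mathcal{A}$ finishes the proof.
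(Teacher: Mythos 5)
Your proof is essentially the same as the paper's: both verify condition (ii) of Lemma \ref{setCover} using the observation that a nontrivial partition of $\mathcal{A}$ into subfamilies with disjoint unions would exhibit $X$ as a union of two disjoint nonempty closed sets, contradicting connectedness, and both then conclude via Lemma \ref{chainDiam}. One small point: you justify that $U$ and $V$ are nonempty by ``the partition is nontrivial,'' but a nontrivial partition of the \emph{index family} $\mathcal{A}$ only guarantees that $\mathcal{T}$ and $\mathcal{T}'$ are nonempty, not that their unions are (some $A\in\mathcal{A}$ could be empty). The paper preempts this by first discarding empty members of $\mathcal{A}$; you should add that reduction, after which your argument is complete.
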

\begin{proof}
Without loss of generality we can assume that the sets in $\mathcal{A}$ are nonempty. It is enough to show that $\mathcal{A}$ is a cover as in Lemma \ref{setCover} $ii)$, since then the result follows from Lemma \ref{setCover} and Lemma \ref{chainDiam}.

Let $\mathcal{A}=\mathcal{B} \cup \mathcal{B'}$ be a nontrivial partition of $\mathcal{A}$. Then $C:=\cup_{B \in \mathcal{B}} B$ and $C':=\cup_{B' \in \mathcal{B'}} B'$ are nonempty closed subsets of $X$ such that $X=C \cup C'$. Since $X$ is connected, $C \cap C'\neq \emptyset$.
\end{proof}

For an integer $k \geq 0$, we denote the $k^{th}$ Hausdorff measure  \cite{bbi01} on a metric space by $\mu^k$.
\begin{prop}[Coarea Formula]\label{coarea}
If $f: X \to \R$ is a smooth $L$-Lipschitz function from an n-dimensional Riemannian manifold $X$, then for each $t_0 \leq t_1 \in \R$ we have
\[\mu^n(f^{-1}[t_0,t_1]) \geq \int_{t_0}^{t_1} \frac{\mu^{n-1}(f^{-1}(t))}{L}dt.\]

\end{prop}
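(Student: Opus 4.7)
The plan is to derive this inequality from the classical coarea formula for Lipschitz functions on Riemannian manifolds and then exploit the global Lipschitz bound on $f$ to turn $|\nabla f|$ into the constant $L$.

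First I would invoke the standard coarea formula (see, e.g., Federer's \emph{Geometric Measure Theory}): for any measurable $A\subseteq X$ and any smooth (or even just Lipschitz) $f:X\to\R$,
\[ \int_A |\nabla f|\, d\mu^n \;=\; \int_\R \mu^{n-1}\!\big(f^{-1}(t)\cap A\big)\, dt. \]
Applying this with $A=f^{-1}[t_0,t_1]$, the right-hand side becomes $\int_{t_0}^{t_1} \mu^{n-1}(f^{-1}(t))\, dt$ since $f^{-1}(t)\cap A = f^{-1}(t)$ for $t\in[t_0,t_1]$ and $f^{-1}(t)\cap A = \emptyset$ otherwise.

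Next I would use the hypothesis that $f$ is $L$-Lipschitz, which implies $|\nabla f|\leq L$ pointwise (a.e., by Rademacher's theorem in the smooth/Lipschitz setting on a Riemannian manifold). Hence
\[ \int_{f^{-1}[t_0,t_1]} |\nabla f|\, d\mu^n \;\leq\; L\cdot \mu^n\!\big(f^{-1}[t_0,t_1]\big). \]
Combining the two displays and dividing by $L$ (assuming $L>0$; the case $L=0$ makes $f$ constant so both sides are trivially handled) yields the claimed inequality.

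There is no serious obstacle here: the proof is essentially a one-line consequence of the coarea formula plus the pointwise bound $|\nabla f|\leq L$. The only point requiring mild care is justifying that the standard coarea formula applies in the stated form on a Riemannian manifold (rather than on $\R^n$); this is standard since we may cover $X$ by coordinate charts and use a partition of unity, or simply appeal to the manifold version in Federer. If one wants to avoid even invoking $|\nabla f|$ directly, the same estimate can be read off from the Lipschitz coarea inequality (Eilenberg's inequality): for any $L$-Lipschitz $f$ and any measurable $A$, $\int_\R \mu^{n-1}(f^{-1}(t)\cap A)\,dt \leq L\cdot \mu^n(A)$, which gives the result immediately.
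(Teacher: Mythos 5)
The paper states Proposition \ref{coarea} without giving a proof, treating it as a standard result. Your derivation is correct and is the standard way to obtain this inequality: apply the classical smooth coarea formula with $A=f^{-1}[t_0,t_1]$, bound $|\nabla f|\leq L$ pointwise using the Lipschitz hypothesis, and divide by $L$ (with the trivial $L=0$ case handled separately). The alternative you mention via Eilenberg's inequality is equally valid and avoids referencing $|\nabla f|$ explicitly. Either route suffices; there is no discrepancy to reconcile with the paper since it offers no proof of its own.
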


%%%%%%%%%%%%%%%%%%%%%%%%%%%%%%%%%%%%%
\subsection{Differential Topology}\label{sec:dt}
A Morse function is called \textit{excellent} if there is a unique critical point for each critical value.

\begin{prop}\label{approx}
Let $f:X \to \R$ be an $L$-Lipschitz function from a compact Riemannian manifold. Then for each $\epsilon > 0$ and $\delta > 0$, there exists an $(L + \delta)$-Lipschitz excellent Morse function $f':X \to \R$ such that $||f-f' ||_\infty \leq \epsilon$.
\end{prop}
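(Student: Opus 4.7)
The plan is to construct $f'$ in three stages: first smoothing $f$ while keeping the Lipschitz constant under control, then perturbing to ensure non-degenerate critical points (the Morse condition), and finally perturbing again to separate the critical values (excellence). I split the budget so that each stage contributes at most $\epsilon/3$ to $\|f-f'\|_\infty$ and at most $\delta/3$ to the Lipschitz excess.

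\textbf{Stage 1 (Smoothing).} First I would approximate $f$ by a smooth function $g:X\to\R$ with $\|f-g\|_\infty < \epsilon/3$ and $\mathrm{Lip}(g) \leq L + \delta/3$. On a compact Riemannian manifold this is standard: one may set $g := e^{t\Delta}f$ for $t>0$ small and invoke the Bochner-formula bound $\mathrm{Lip}(e^{t\Delta}f) \leq e^{-Kt}\,\mathrm{Lip}(f)$, where $K$ is a lower Ricci bound on $X$ (finite since $X$ is compact), so the right-hand side tends to $\mathrm{Lip}(f)$ as $t \to 0$. Alternatively, one can mollify $f$ in a finite atlas using a partition of unity and verify the Lipschitz bound by shrinking the mollifier.

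\textbf{Stages 2 and 3 (Morse and excellence).} Fix a smooth embedding $\iota: X \hookrightarrow \R^N$ (Whitney) and consider the family $g_v(x) := g(x) + v\cdot \iota(x)$ for $v\in\R^N$. The partial derivative of the map $(x,v)\mapsto d(g_v)_x$ in $v$ is the transpose of $d\iota_x$, which is surjective onto $T^*_x X$ because $\iota$ is an immersion; parametric transversality (Sard) then yields that $g_v$ is Morse for almost every $v\in\R^N$. For $|v|$ small, $g_v$ is $C^1$-close to $g$; I pick a generic small $v$ and set $h := g_v$, which satisfies $\|h-g\|_\infty < \epsilon/3$ and $\mathrm{Lip}(h) \leq \mathrm{Lip}(g) + \delta/3$. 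To upgrade to excellence, let $p_1,\ldots,p_k$ denote the (finitely many) critical points of $h$, choose pairwise disjoint open neighborhoods $U_i\ni p_i$, and choose smooth bump functions $\phi_i$ supported in $U_i$ and identically $1$ on smaller neighborhoods $V_i\ni p_i$. Define
\[
f'(x) := h(x) + \sum_{i=1}^k a_i\, \phi_i(x).
\]
Since $d\phi_i \equiv 0$ on $V_i$, the critical points in $V_i$ and their Hessians are unchanged; since $|dh|$ is bounded below on the compact set $\overline{U_i}\setminus V_i$, for $|a_i|$ small enough no new critical points appear there; and outside $\bigcup_i U_i$ one has $f' = h$. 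Hence $f'$ is Morse with critical values $h(p_i)+a_i$, which are distinct for generic $(a_1,\ldots,a_k)$ (the failure locus being a finite union of hyperplanes). Choosing the $|a_i|$ small absorbs the final $\epsilon/3$ of the $C^0$ budget and $\delta/3$ of the Lipschitz budget.

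\textbf{Main obstacle.} Stages 2 and 3 are routine differential-topological perturbations via parametric transversality and bump modification. The technical heart is Stage 1: smoothing a Lipschitz function on a Riemannian manifold while preserving the Lipschitz constant up to arbitrarily small error. This requires either the heat-kernel estimate (the Bochner-formula bound under a Ricci lower bound) or a careful partition-of-unity mollification, each of which needs some care but is well established in the literature.
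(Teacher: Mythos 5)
Your three-stage structure---smooth, then Morse, then excellent, each stage costing at most $\epsilon/3$ in $C^0$ and a share of $\delta$ in the Lipschitz constant---is exactly the paper's decomposition; the paper simply invokes Greene--Wu for the Lipschitz-preserving smoothing and Audin--Damian for the Morse and excellence steps. Your proposal fleshes out those citations with correct standard constructions (heat-flow or mollifier smoothing, Whitney-embedding parametric transversality, and bump-function perturbation of critical values), so the two proofs are essentially the same argument at different levels of detail.
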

\begin{proof}
By \cite[Proposition 2.1.]{gw79}, there exists an $L$-Lipschitz smooth function $g: X \to \R$ such that $||f-g||_\infty \leq \epsilon/3$. By \cite[Proposition 1.2.4.]{ad14} there exists a Morse function $g': X \to \R$ such that $g'$ is $(L+(\delta/2))$-Lipschitz and $||g-g'||_\infty \leq \epsilon/3$. As it is explained in \cite[p. 40]{ad14}, there exists an excellent Morse function $f': X \to \R$ such that $f$ is $(L+\delta)$-Lipschitz and $||g'-f'||_\infty \leq \epsilon/3$. This $f'$ satisfies the desired properties.
\end{proof}

When $f: X \to \R$ is a Morse function defined on a compact manifold, then we already know that the Reeb space is a graph. The following proposition shows that when $f$ is an excellent Morse function, the Reeb graph satisfies an extra condition. We will use this result later in the proof of Proposition \ref{distortion}.

\begin{prop}\label{degree}
Let $X$ be a compact Riemannian manifold of dimension $n \geq 2$, and let $f: X \to \R$ is a proper excellent Morse function. Then the vertices of the Reeb graph $X_f$ have degrees at most 3.
\end{prop}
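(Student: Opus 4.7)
The plan is to analyze the degree of a vertex $v = \pi(p) \in X_f$ coming from a critical point $p$ of Morse index $k$ and critical value $c = f(p)$ by reducing it to the boundary structure of a small cobordism around the connected component $C \subset f^{-1}(c)$ containing $p$, then applying the standard handle description. Since $f$ is excellent, $p$ is the unique critical point at level $c$; pick $\epsilon > 0$ small enough that $[c-\epsilon, c+\epsilon]$ contains no other critical value, and let $\overline{U'}$ be the closure of the connected component of $f^{-1}((c-\epsilon, c+\epsilon))$ containing $C$. Setting $L_{\pm} := \overline{U'} \cap f^{-1}(c \pm \epsilon)$, the set $\overline{U'}$ is a compact connected $n$-manifold with boundary $L_- \sqcup L_+$, and unpacking the definition of the Reeb quotient map shows that the degree of $v$ in $X_f$ equals $|\pi_0(L_-)| + |\pi_0(L_+)|$.

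The main ingredient is classical Morse theory: $\overline{U'}$ is a cobordism from $L_-$ to $L_+$ obtained by attaching a single $k$-handle $D^k \times D^{n-k}$ to $L_- \times I$ along an embedded $(k-1)$-sphere. In particular, $\overline{U'}$ is homotopy equivalent to $L_-$ with a $k$-cell glued along that sphere, so $H_i(\overline{U'}, L_-) \cong \tilde H_i(S^k)$ vanishes outside degree $k$. Feeding this into the long exact sequence of the pair $(\overline{U'}, L_-)$ and using that $\overline{U'}$ is connected, one reads off: $L_- = \emptyset$ if $k = 0$, $|\pi_0(L_-)| \leq 2$ if $k = 1$, and $|\pi_0(L_-)| = 1$ if $k \geq 2$. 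Viewing $\overline{U'}$ dually as obtained from $L_+ \times I$ by an $(n-k)$-handle, the symmetric argument yields $L_+ = \emptyset$ if $k = n$, $|\pi_0(L_+)| \leq 2$ if $k = n-1$, and $|\pi_0(L_+)| = 1$ if $k \leq n - 2$.

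If $n \geq 3$, then $k = 1$ and $k = n-1$ are distinct indices, so for every value of $k$ at most one of $|\pi_0(L_-)|$, $|\pi_0(L_+)|$ is allowed to be $2$ while the other is forced to equal $1$ (or $0$ at the extremes), which makes the sum at most $3$. The genuine obstacle is the two-dimensional saddle case $n = 2$, $k = 1$, where both homological bounds yield only $\leq 2$ for a naive total of $4$. I would close this case by Euler characteristic: $L_\pm$ are closed $1$-manifolds, so $\chi(L_\pm) = 0$; attaching a $1$-cell shifts $\chi$ by $-1$, so $\chi(\overline{U'}) = -1$. Viewing $\overline{U'}$ as a compact surface with $b := |\pi_0(L_-)| + |\pi_0(L_+)|$ boundary circles and genus $g \geq 0$, the identity $\chi(\overline{U'}) = 2 - 2g - b$ in the orientable case, respectively $\chi(\overline{U'}) = 2 - g - b$ in the non-orientable case, together with $\chi(\overline{U'}) = -1$ forces $b \leq 3$, completing the bound.
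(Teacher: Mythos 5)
Your proof is correct, and it takes a genuinely different route from the paper's. The paper argues via the stable and unstable manifolds $W^s_p$, $W^u_p$ of the gradient flow at a critical point $p$: these are disks of complementary dimensions $\dim W^s_p + \dim W^u_p = n$, and the number of edges below (resp.\ above) the vertex exceeds one only when the corresponding disk is $1$-dimensional, in which case it is at most $2$; for $n\geq 3$ the two disks cannot both be $1$-dimensional, giving degree at most $3$, and for $n=2$ the paper simply asserts that the remaining saddle case yields degree $3$. Your argument replaces this geometric dimension count by the handle description of the elementary cobordism $\overline{U'}$, reads off the bounds $|\pi_0(L_\pm)| \leq 2$ (or $=1$, or $=0$) from the long exact sequence of the pair $(\overline{U'},L_-)$ using $H_*(\overline{U'},L_-)\cong \tilde H_*(S^k)$, and then handles the genuine $n=2$, $k=1$ case by an Euler characteristic count ($\chi(\overline{U'})=-1$ forces at most three boundary circles). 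The two proofs are morally dual — the stable/unstable disks are the core and cocore of your handle — but yours is more algebraic and, notably, actually proves the surface-saddle bound that the paper states without justification: nothing in the paper's dimension count rules out a priori that a $2$-dimensional saddle has two components below \emph{and} two above. Your Euler characteristic step is exactly what closes that gap.
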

\begin{proof}
Consider the Reeb graph $X_f$ as a directed graph, where the orientation of each edge is given so that $f$ is increasing. Since $f$ is an excellent Morse function, the vertices of the Reeb graph $X_f$ corresponds to the critical points of $f$. Let $p$ be such a point. Let $W^s_p,W^u_p$ be the stable and unstable manifolds of the gradient of $f$ at $p$ (see \cite[p. 27]{ad14}). They are disks meeting at $p$ and the sum of their dimensions is $n$ \cite[Proposition 2.1.5]{ad14}. Number of incoming edges (resp. outgoing edges) is more than one only if $W^s$ (resp. incoming) has dimension $1$, and in that the number of edges is at most $2$. This implies our statement for $n \geq 3$. For $n=2$, the case where  both the stable and the unstable manifold are 1-disks is realized by a \textit{saddle}, and the degree of the corresponding vertex then becomes $3$.
\end{proof}

\begin{lemma}\label{codim1}
Let $Y$ be codimension 1 compact submanifold of $X$. If $Y$ has at least $\beta_1(X)+1$ connected components, then $X - Y$ is disconnected.
\end{lemma}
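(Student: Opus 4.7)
My plan is to argue by contrapositive: I will assume $X-Y$ is connected and derive $\beta_1(X)\geq k$, where $k$ denotes the number of connected components of $Y$; this directly contradicts the hypothesis $k\geq \beta_1(X)+1$.

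First I would fix a tubular neighborhood $N=\bigsqcup_{i=1}^{k} N_i$ of $Y$, with $N_i$ a tubular neighborhood of the component $Y_i$. Each $N_i$ deformation retracts onto $Y_i$, so $N$ has $k$ connected components. Assuming (as in the main application of this lemma, where $Y$ will be a regular level set of an $\R$-valued function) that each $Y_i$ is two-sided in $X$, the complement $N_i\setminus Y_i$ splits into exactly two connected components, and hence $N\setminus Y$ has $2k$ components in total.

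With this count in hand I would apply the Mayer--Vietoris sequence to the open cover $X=N\cup(X\setminus Y)$, whose pairwise intersection is $N\setminus Y$:
\[
H_1(X)\longrightarrow H_0(N\setminus Y)\xrightarrow{\;\partial\;} H_0(N)\oplus H_0(X\setminus Y)\longrightarrow H_0(X)\longrightarrow 0.
\]
Taking rational coefficients and using that both $X$ and $X\setminus Y$ are connected, the last three terms have dimensions $2k$, $k+1$, and $1$. The rightmost map is surjective with kernel of dimension $k$, so by exactness the image of $\partial$ has dimension $k$, and the kernel of $\partial$ has dimension $2k-k=k$. By exactness again this kernel is the image of $H_1(X)\to H_0(N\setminus Y)$, forcing $\beta_1(X)\geq k$ and completing the contradiction.

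The step I expect to require the most care is the claim that $N_i\setminus Y_i$ splits into two pieces: this relies on $Y_i$ being two-sided in $X$, i.e.\ on triviality of its normal line bundle. In the application to Morse level sets this is automatic, but to treat a general compact codimension-$1$ submanifold one should run the same bookkeeping with $\mathbb{Z}/2$ coefficients, using that any such submanifold represents a $\mathbb{Z}/2$ fundamental class and that the dual loop construction $\gamma_i\cdot Y_j=\delta_{ij}\pmod 2$ is insensitive to orientability. Everything else in the argument is purely a dimension count in the long exact sequence and should go through without incident.
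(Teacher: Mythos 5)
Your argument is correct under the two-sidedness hypothesis, but it takes a genuinely different route from the paper's. You run a Mayer--Vietoris sequence in homology for the cover $X = N \cup (X\setminus Y)$ and then count components; the paper instead runs the long exact cohomology sequence of the pair $(X, X-Y)$ with $\mathbb{Z}/2$ coefficients, identifies $H^*(X,X-Y)$ with $H^*(N,N-Y)$ by excision, and then invokes the $\mathbb{Z}/2$ Thom isomorphism to replace $H^1(N,N-Y)$ by $H^0(Y)$ and $H^0(N,N-Y)$ by $0$. The payoff of the paper's route is exactly the point you flag as delicate: the $\mathbb{Z}/2$ Thom isomorphism holds for arbitrary (possibly non-orientable) real line bundles, so the paper never has to know whether $N_i \setminus Y_i$ has one component or two. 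In your argument, by contrast, the count $\dim H_0(N\setminus Y) = 2k$ is load-bearing, and it fails if some $Y_i$ is one-sided (then $N_i\setminus Y_i$ is connected); merely re-running your Mayer--Vietoris bookkeeping with $\mathbb{Z}/2$ coefficients does not repair this, since the number of connected components is a coefficient-independent quantity. What you gesture at in the last sentence --- intersection numbers $\gamma_i\cdot Y_j = \delta_{ij}\ (\mathrm{mod}\ 2)$ together with Poincar\'e duality over $\mathbb{Z}/2$ --- is a perfectly good \emph{third} proof, but it is a different argument, not the same bookkeeping. Your version does suffice for the way the lemma is used in this paper (Proposition~\ref{seperate} applies it to regular level sets of a Morse function, whose normal bundles are trivialized by $\nabla f$), but the paper's Thom-isomorphism route proves the lemma at the generality at which it is stated.
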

\begin{proof}
During the proof we use $\Z/2$ coefficients for cohomology. By the long exact cohomology sequence for the pair $(X,X-Y)$, we have the following exact sequence:
\[ H^0(X,X-Y) \to H^0(X) \to H^0(X-Y) \to H^1(X,X-Y) \to H^1(X). \]
For any open neighborhood $N$ of $Y$, by excision $H^*(X,X-Y) \cong H^*(N,N-Y)$. Let $N$ be a tubular neighborhood of $Y$ in $X$, hence $(N,N-Y)$ is homeomorphic to $(N_X Y,N_X Y - Y)$ where $N_X Y$ is the rank 1 normal bundle of $Y$ in $X$ \cite[p. 100]{b93}. By the Thom Isomorphism Theorem \cite[p. 106]{ms74}, $H^1(N_X,N_X-Y) \cong H^0(Y)$, $H^0(N_X Y,N_X - Y)=0$. Hence the exact sequence above becomes
\[ 0 \to H^0(X) \to H^0(X-Y) \to H^0(Y) \to H^1(X), \]
which implies
\[ \dim H^0(Y) \leq \dim H^0(X-Y) - 1 + \beta_1(X).  \]
\end{proof}

\begin{prop}\label{seperate}
Let $X$ be a connected manifold, $f: X \to \R$ be a proper Morse function, $t_0$ be a regular value of $f$ and $Y=f^{-1}(t_0)$. Let $s < t_0 < t $, $A$ be a connected component of $f^{-1}(s)$ and $B$ be a connected component of $f^{-1}(t)$ . Then there exists connected components $Y_1,\dots,Y_k$, $k \leq \beta_1(X)+1$ of $Y$ separating $A$ from $B$.
\end{prop}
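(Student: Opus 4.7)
The plan is to build an auxiliary finite multigraph $G$ encoding the adjacency between components of $Y$ and components of $X\setminus Y$, perform a spanning-tree cut on $G$ to extract a separating subcollection of components of $Y$, and then bound the size of this cut by re-running the cohomological inequality from the proof of Lemma \ref{codim1}.

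First I would set up the combinatorics. Since $f$ is proper and $t_0$ is regular, $Y$ is a compact codimension-$1$ submanifold with finitely many components $Y_1,\dots,Y_m$, and each component of $X\setminus Y$ has $f-t_0$ of constant sign (its image in $\R\setminus\{t_0\}$ is connected); thus $A$ lies in a unique component $U_a$ on which $f<t_0$ and $B$ lies in a unique component $U_b$ on which $f>t_0$, so $U_a\neq U_b$. Let $G$ be the multigraph whose vertices are the components of $X\setminus Y$ and whose edges are the $Y_j$'s, each $Y_j$ joining the (at most two) such components whose closures contain it; well-definedness uses the two-sided tubular neighborhood of $Y_j$ afforded by regularity of $t_0$. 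Making a smooth path between two points of $X$ transversal to $Y$ produces a walk in $G$ between the corresponding vertices, so $G$ is connected, and since $G$ has only $m$ edges it is automatically finite.

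Next I would pick a spanning tree $T$ of $G$ and any edge $Y_{j^\ast}$ on the unique $T$-path from $U_a$ to $U_b$. Removing $Y_{j^\ast}$ splits $T$ into subtrees $T_A\ni U_a$ and $T_B\ni U_b$. Define
\[W_A \;:=\; \bigcup_{U\in V(T_A)} U \;\cup\; \bigcup_{\substack{Y_j \text{ with both}\\ G\text{-endpoints in }V(T_A)}} Y_j,\]
and $W_B$ analogously, and let $Z$ be the union of those $Y_j$'s whose $G$-endpoints straddle the partition $V(T_A)\sqcup V(T_B)$; write $k$ for its number of components. Openness of $W_A$ and $W_B$ is local via tubular neighborhoods, and their connectedness follows from $T_A$ and $T_B$ being connected subtrees: for each edge $Y_j$ of $T_A$ with endpoints $U,U'$, the set $U\cup Y_j\cup U'$ is connected in $X$, so the tree structure glues the pieces of $W_A$ into a connected whole, and similarly for $W_B$. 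Hence $X\setminus Z=W_A\sqcup W_B$ has exactly two connected components; since $A\subset U_a\subset W_A$ and $B\subset U_b\subset W_B$, the components of $Z$ separate $A$ from $B$.

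Finally, for the cardinality bound I would re-run the cohomological computation in the proof of Lemma \ref{codim1} with $Z$ (a codimension-$1$ compact submanifold) in place of $Y$; the same exact sequence yields $\dim H^0(Z)\leq \dim H^0(X\setminus Z)-1+\beta_1(X)=2-1+\beta_1(X)$, so $k\leq \beta_1(X)+1$ as required. I expect the main obstacle to be the careful topological verification that $W_A$ and $W_B$ are actually connected in $X$: one must match the purely combinatorial tree structure on $T_A,T_B$ with genuine topological adjacency inside $W_A,W_B$, and handle $G$-self-loops (nonseparating components of $Y$) and potential multi-edges correctly so that nothing extraneous is forced into the cut $Z$.
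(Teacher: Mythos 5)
Your proof is correct, and it takes a genuinely different route from the paper. The paper's argument begins with a minimal separating collection $Y_1,\dots,Y_k$ and proceeds by contradiction: if $k>\beta_1(X)+1$, Lemma~\ref{codim1} forces $Y_2\cup\cdots\cup Y_k$ to disconnect $X$, which is then ruled out by a delicate tubular-neighborhood argument (curves $\gamma_i$ witnessing minimality, entrance/exit points, transversal modifications) showing that $X\setminus(Y_1\cup\cdots\cup Y_k)$ has exactly two components and that $Y_1$ touches both. Your approach instead constructs the separating family directly: the dual multigraph $G$, a spanning tree $T$, and an edge on the $T$-path from $U_a$ to $U_b$ produce the cut $Z$, and the fact that $X\setminus Z$ has exactly two components falls out of the tree structure rather than a minimality argument. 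You then need the cohomological \emph{inequality} $\dim H^0(Z)\leq\dim H^0(X\setminus Z)-1+\beta_1(X)$ from the proof of Lemma~\ref{codim1}, slightly more than the statement itself (which only addresses the case $\dim H^0(X\setminus Z)=1$); the paper, by contrast, applies the statement of Lemma~\ref{codim1} as a black box. Net trade-off: your construction is more transparent and avoids the minimality machinery, at the mild cost of opening up Lemma~\ref{codim1}'s proof and of the bookkeeping you flag for self-loops and multi-edges. Both arguments stand; yours is arguably the cleaner path to the count $k\leq\beta_1(X)+1$.
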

\begin{proof}
Since $s < t_0 < t $, $Y$ separates $A$ from $B$. Let $Y_1,\dots,Y_k$ be a minimal collection of components of $Y$ separating $A$ from $B$. We will show that $k \leq \beta_1(X)+1$. Note that if $k > \beta_1(X) + 1$, then the family $Y_2,\dots, Y_k$ separates $X$ into at least two components by Lemma \ref{codim1}. Hence it is enough to show that this family does not separate $X$.

Let $Y'=Y_1 \cup \dots \cup Y_k$. Let us show that $X-Y'$ has exactly two components corresponding to $A$ and $B$. It has at least two components corresponding to $A$ and $B$. By minimality, there exist a curve $\gamma_i$ from $A$ to $B$ intersecting $Y_i$ and not intersecting $Y_j$ for each $i,j=1,\dot,k$, $i \neq j$. Choose closed tubular neighborhoods $N_i$ of $Y_i$ for each $i=1,\dots,k$ such that they are pairwise disjoint and $N_i$ does not intersect $\gamma_j$ for $i \neq j$. The normal bundle $N_{Y_i} X$ is trivial since it is 1-dimensional and it has a nowhere zero section given by the gradient of $f$. Therefore $N_i - Y_i$ has exactly two components. We can modify $\gamma_i$ if necessary so that it intersects $Y_i$ once and transversally. Let $x,y$ be the entrance and exit points of $\gamma_i$ to $N_i$ respectively. Note that $x,y$ should be in different components of $N_i - Y_i$ since otherwise we can modify $\gamma_i$ so it avoids $N_i$, hence $Y'$ altogether. Now, remove the part of $\gamma_i$ between $x,y$ and by using triviality of the normal bundle replace it with one that intersects $Y_i$ at a single point transversally. Let $x_i$ be this point. $\gamma_i - \{x_i \}$ has two components, each intersecting with different components of $N_i - Y_i$ by transversality. Therefore $N_i-Y_i=N_i^A \amalg N_i^B$ where $N_i^A$ is the component of $N_i - Y_i$ which is in the same component of $X - Y'$ with $A$, and $N_i^B$ is defined similarly. Now for any point in $q$ let $\gamma$ be a path in $X$ connecting $q$ to $A$. If $\gamma$ does not intersect $Y'$ then $q$ is in the same component with $A$ in $X - Y'$. Assume that $\gamma$ intersects $Y'$. Let $N_i$ be the first tubular neighborhood that $\gamma$ enters into. Then $q$ is in the same component of $X - Y'$ with the entrance point, which we have shown that either connected to $A$ or $B$ in $X - Y'$. Therefore $X - Y'$ has two components.

Any point in $Y_1$ can be connected by a path to both $B$ and $A$ by using $\gamma_i$ in $X - (Y_2 \cup \dots \cup Y_k)$. Combining this with the previous paragraph, we see that $X - (Y_2 \cup \dots \cup Y_k)$ is connected.

\end{proof}    
    
\section{Reeb Graphs}

Given a function $f: X \to Y$ from a topological space $X$, the \textit{Reeb space} $X_f$ corresponding to $f$ is defined as the quotient space $X/ \sim$ where $x \sim y$ if $x,y$ are in the same connected component of a fiber of $f$. Note that $f$ splits through the canonical surjection $\pi: X \to X_f$ which we call the Reeb quotient map. By an abuse of notation we denote this map by $f$ too, i.e. $f: X_f \to Y$ and $f \circ \pi = f$.

If $Y$ is a metric space, the Reeb space has a metric structure defined as follows. For a path $\gamma \in X$, define $l_f(\gamma) := l_Y(f \circ \gamma)$, where $l_Y$ is the length structure of the metric space $Y$ \cite{bbi01}. Let $d_f$ denote the pseudo-metric induced by this length structure. Note that if $x \sim y$ then $d_f(x,y)=0$, hence $d_f$ induces a pseudo-metric on the Reeb space.

In this paper we are interested in the case where $Y = \R$ and $f$ is an excellent Morse function from a compact Riemannian manifold $X$. In that case the Reeb space becomes a graph, which we denote by $X_f$, and the metric $d_f$ becomes a length metric on $X_f$. Note that we have the following diagram:
\[\begin{tikzcd}
X \arrow{r}{\pi} \arrow{rd}{f} & X_f \arrow{d}{f} \\
&\R.
\end{tikzcd}\]
The following proposition gives some basic facts about the topological behavior of the Reeb quotient map, which we state without a proof.
\begin{prop}\label{reeb}
\begin{enumerate}[i)]
\item $\pi_*: \pi_1(X) \to \pi_1(X_f)$ is surjective. In particular $\beta_1(X_f) \leq \beta_1(X)$.
\item \cite[Proposition 1.2.]{z06} The preimage of a connected set under the map $\pi: X \to X_f$ is connected.
\end{enumerate}
\end{prop}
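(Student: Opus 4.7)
Both parts rest on the single structural fact that, by construction, every fiber $\pi^{-1}(y)$ is one connected component of a level set $f^{-1}(t)$, and hence connected in $X$. For part (ii) I would use a saturation argument. Suppose, toward contradiction, that $C\subseteq X_f$ is connected and $\pi^{-1}(C)=A\sqcup B$ is a separation into disjoint, nonempty, relatively closed subsets. Because every fiber of $\pi$ is connected, each fiber that meets $\pi^{-1}(C)$ lies entirely in $A$ or entirely in $B$, so both $A$ and $B$ are $\pi$-saturated and $C=\pi(A)\sqcup\pi(B)$. To upgrade this to a separation of $C$, I would check that $\pi$ is a closed map: $X$ is compact and the Reeb quotient $X_f$ of an excellent Morse function on a compact manifold is Hausdorff, so the continuous image of a closed subset of $X$ is compact, hence closed in $X_f$. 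Consequently $\pi(A)$ and $\pi(B)$ are closed in $C$, contradicting its connectedness.

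For part (i) I would carry out a direct edge-by-edge lift that exploits the graph structure of $X_f$. Fix $x_0\in X$ and set $y_0:=\pi(x_0)$; given a loop $\gamma\colon [0,1]\to X_f$ based at $y_0$, first homotope $\gamma$ to a piecewise-edge loop, partitioning $[0,1]$ as $0=t_0<t_1<\cdots<t_N=1$ so that $\gamma|_{[t_i,t_{i+1}]}$ traverses a single closed edge $e_i$ from vertex $v_i$ to $v_{i+1}$, with $v_0=v_N=y_0$. By part (ii) applied to the connected set $e_i\subseteq X_f$, the set $\pi^{-1}(e_i)\subseteq X$ is connected, and using the local Morse structure I would upgrade this to path-connectedness. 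Then I would choose lifts $x_i\in\pi^{-1}(v_i)$, taking the prescribed $x_0$ at the basepoint, and pick a continuous path $\tilde\gamma_i$ in $\pi^{-1}(e_i)$ from $x_i$ to $x_{i+1}$. Concatenating produces a loop $\tilde\gamma$ in $X$ based at $x_0$; on each subinterval $\pi\circ\tilde\gamma_i$ is a path in $e_i\cong[0,1]$ from $v_i$ to $v_{i+1}$, hence homotopic rel endpoints to $\gamma|_{[t_i,t_{i+1}]}$. Gluing these edge-wise homotopies yields $\pi_*[\tilde\gamma]=[\gamma]$, giving surjectivity of $\pi_*$. The consequence $\beta_1(X_f)\leq\beta_1(X)$ then follows because a surjection on $\pi_1$ descends to a surjection on abelianizations and therefore on $H_1(-;\mathbb{Q})$.

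The main obstacle I anticipate is promoting the connectedness given by part (ii) to honest path-connectedness of $\pi^{-1}(e_i)$ across the critical values at the endpoints of $e_i$. Over the interior of $e_i$ the preimage is a trivial bundle of a single connected regular level-set component over an open interval and is visibly path-connected; the endpoints correspond to critical values where the level-set component can degenerate. The rescue is the local Morse normal form at a critical point: every point of a critical fiber lies in the closure of the adjacent regular strata, and can be joined to them by a short path within $\pi^{-1}(e_i)$. Once this is in hand, the rest of the argument amounts to bookkeeping with the graph structure of $X_f$.
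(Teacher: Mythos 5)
The paper explicitly states this proposition \emph{without proof}, deferring to Zinov'ev only for a citation on (ii), so there is no in-paper argument to compare against; your task was effectively to supply the missing argument, and you have done so correctly. For (ii) the reasoning is sound: each fiber of $\pi$ is by construction a connected component of a level set, so a relative clopen decomposition $\pi^{-1}(C)=A\sqcup B$ is automatically $\pi$-saturated; and since $X$ is compact and $X_f$ is Hausdorff, $\pi$ is a closed map, hence its restriction $\pi^{-1}(C)\to C$ is also closed, so $C=\pi(A)\sqcup\pi(B)$ becomes a genuine separation of $C$, a contradiction. For (i) the edge-by-edge lifting scheme is the right idea, and you have correctly isolated the one nontrivial lemma, namely that $\pi^{-1}(e_i)$ is \emph{path}-connected rather than merely connected. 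To close this: with $[a,b]$ the $f$-range of the closed edge $e_i$, the set $\pi^{-1}(\mathrm{int}\,e_i)$ is a connected component of the open manifold $f^{-1}\bigl((a,b)\bigr)$ (Ehresmann's theorem makes $f$ a fiber bundle there, since $(a,b)$ contains no critical values), hence an open submanifold of $X$ and thus path-connected; each critical slice $\pi^{-1}(v_i)$ is a compact connected set that is a smooth hypersurface away from the critical point and conical at it (Morse normal form), hence locally path-connected and therefore path-connected; and in the Morse chart at the critical point one explicitly sees an arc descending into $\pi^{-1}(\mathrm{int}\,e_i)$. Concatenating gives path-connectedness of $\pi^{-1}(e_i)$. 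The remaining steps, gluing the edge-wise rel-endpoint homotopies and then abelianizing and tensoring with $\mathbb{Q}$ to deduce $\beta_1(X_f)\le\beta_1(X)$ from $\pi_1$-surjectivity, are correct.
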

Now we can prove the following proposition.
\begin{prop}\label{lipschitz}
\begin{enumerate}[(i)]
\item \cite[Proposition 1.1]{z06}\footnote{We give a different proof.} If $f: X \to \R $ is $L$-Lipschitz, then $\pi: X \to X_f$ is $L$-Lipschitz too.
\item $f: X_f \to \R$ is $1$-Lipschitz.
\end{enumerate}
\end{prop}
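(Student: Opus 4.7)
The plan is to reduce both statements to comparisons at the level of curves and then pass to infima. The key observation I would use is that the metric $d_{X_f}$ on the Reeb space is essentially the pseudo-metric $d_f$ on $X$ descended to the quotient: since by construction $d_f(x,x') = 0$ whenever $x \sim x'$, the pseudo-metric $d_f$ is constant on $\pi^{-1}(\pi(x)) \times \pi^{-1}(\pi(y))$ and the induced distance satisfies $d_{X_f}(\pi(x), \pi(y)) = d_f(x,y)$.

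For part (i), the central inequality I would establish first is the pointwise one: for any rectifiable curve $\gamma$ in $X$, we have
\[ l_f(\gamma) = l_\R(f \circ \gamma) \leq L \cdot l_X(\gamma), \]
which is immediate from the $L$-Lipschitz property applied to every partition used to compute the length of $f \circ \gamma$. Taking the infimum over curves joining $x$ and $y$, and using the fact that $X$ is a Riemannian manifold (and so a length space, where $d_X$ equals the infimum of lengths of curves), I obtain $d_f(x,y) \leq L \cdot d_X(x,y)$. Combined with the identification above this yields $d_{X_f}(\pi(x), \pi(y)) \leq L \cdot d_X(x,y)$.

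For part (ii), I would use the dual inequality for curves: for any curve $\gamma$ from $x$ to $y$ in $X$, the curve $f \circ \gamma$ in $\R$ joins $f(x)$ to $f(y)$, and since straight segments minimize length in $\R$,
\[ l_f(\gamma) = l_\R(f \circ \gamma) \geq |f(y)-f(x)|. \]
Taking the infimum over curves gives $d_f(x,y) \geq |f(x)-f(y)|$. Since $f$ factors through $\pi$, this reads $|f(\pi(x)) - f(\pi(y))| \leq d_{X_f}(\pi(x), \pi(y))$, which is the $1$-Lipschitz property of $f: X_f \to \R$.

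There is no real obstacle here; the only point requiring mild care is the identification $d_{X_f}(\pi(x),\pi(y)) = d_f(x,y)$, which I would justify in one sentence by noting that any path in $X$ through representatives of $[x]$ and $[y]$ can be shortened in $d_f$ by zero cost within each fiber, so the infimum over representatives collapses to $d_f(x,y)$ itself.
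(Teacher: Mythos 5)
Your proof is correct and follows essentially the same route as the paper: both parts rest on comparing $l_\R(f\circ\gamma)$ against $L\,l_X(\gamma)$ and against $|f(x)-f(y)|$ respectively, and then taking infima over curves. You simply spell out the identification $d_{X_f}(\pi(x),\pi(y))=d_f(x,y)$ and the curve-level inequalities a bit more explicitly than the paper does.
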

\begin{proof}
Given points $x,y$ in $X$, let $\Gamma(x,y)$ denote the set of continuous curves from $x$ to $y$ in $X$. Let $l_\R,l_X$ denote the canonical length structures on $\R, X$ respectively.

$(i)$ Let $x,y$ be points in $X$ and $r,s$ be their images under $\pi$ respectively. Then we have
\[d_f(r,s)= \inf_{\gamma \in \Gamma(x,y)} l_\R(f \circ \gamma) \leq L \inf_{\gamma \in \Gamma(x,y)} l_X (\gamma) = L\, d(x,y). \]

$(ii)$ Let $r,s$ be points in $X_f$ and $x,y$ be points in $X$ mapped onto $r,s$ under $\pi: X \to X_f$ respectively. Then we have
\[|f(r)-f(s)|=|f(x)-f(y)| \leq \inf_{\gamma \in \Gamma(x,y)} l_\R(f \circ \gamma) = d_f(r,s). \]
\end{proof}

\section{Distortion of the Reeb quotient map}

Recall that the distortion of a function $\phi: X \to Y$ between metric spaces $(X,d), (Y,d')$ is defined as $$\mathrm{dis}(\phi):=\sup_{x,x' \in X} |d(x,x')-d'(\phi(x),\phi(x'))|.$$ If $\phi$ is surjective, then the Gromov-Hausdorff distance between $X$ and $Y$ satisfies $\dgh(X,Y) \leq \mathrm{dis}(\phi)/2$ \cite{bbi01}.

In this section we  follow \cite[Section 3]{z06} with suitable modifications to reach the level of generality we want.
    
    Let $X$ be a compact Riemannian manifold and $f: X \to \R$ be an $L$-Lipschitz excellent Morse function. Let $p$ be a point in $X$ and let $\theta:=\pi(p) \in X_f$. Let $\epsilon_p:=||f-d(p,\cdot)||_\infty$.
    
    Given a subset $Y$ of $X_f$ let us denote its preimage in $X$ under $\pi$ by $C_Y$. For $r \in X_f$ let $C_r:=C_{\{r\}}$ and $x \in X$ let $C_x:=C_{\pi(x)}$. Note that if $Y$ is connected then $C_Y$ is connected by Proposition \ref{reeb}. In particular for any $x$ in $X$ and any $r \in X_f$, $C_x$ and $C_r$ are connected. 
    
    Given three subspaces $A,B,C$ of a topological space we say $B$ separates $A$ and $C$ if every path from $A$ to $C$ intersects $B$.
    
    \begin{prop}\label{comparison1}\cite[Lemma 3.6.]{z06}
    Let $r,s$ be points in $X_f$ so that $s$ separates $r$ and $\theta$. Then for any $x$ such that $\pi(x)=r$, we have 
    \[\mathrm{dist}(x,C_s) \leq d_f(r,s) + 2 \epsilon_p. \]
    \end{prop}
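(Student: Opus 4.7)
The plan is to construct a concrete witness $y \in C_s$ on a minimizing geodesic from $p$ to $x$ in $X$, and then estimate $d(x,y)$ using the fact that $f$ is within $\epsilon_p$ of $d(p,\cdot)$ together with the $1$-Lipschitz property of $f:X_f \to \R$ from Proposition \ref{lipschitz}(ii).

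First I would observe that $C_s$ must meet every path in $X$ from $x$ to $p$. Indeed, if $\gamma$ is such a path, then $\pi \circ \gamma$ is a path in $X_f$ from $r=\pi(x)$ to $\theta=\pi(p)$, and by the separation hypothesis it passes through $s$; so $\gamma$ passes through $C_s = \pi^{-1}(s)$. Since $X$ is a compact Riemannian manifold, Hopf--Rinow supplies a minimizing unit-speed geodesic $\gamma:[0,d(p,x)] \to X$ with $\gamma(0)=p$ and $\gamma(d(p,x))=x$; pick any $t_0$ for which $y := \gamma(t_0) \in C_s$. The minimizing property of $\gamma$ then gives $d(p,y) = t_0$ and $d(x,y) = d(p,x) - d(p,y)$.

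Next I would run the estimate. Using $\|f - d(p,\cdot)\|_\infty \leq \epsilon_p$ on $X$ together with $f(x)=f(r)$ and $f(y)=f(s)$,
\[ d(x,y) \;=\; d(p,x) - d(p,y) \;\leq\; \bigl(f(x)+\epsilon_p\bigr) - \bigl(f(y)-\epsilon_p\bigr) \;=\; f(r) - f(s) + 2\epsilon_p. \]
Proposition \ref{lipschitz}(ii) then yields $f(r)-f(s) \leq |f(r)-f(s)| \leq d_f(r,s)$, and hence $\mathrm{dist}(x, C_s) \leq d(x,y) \leq d_f(r,s) + 2\epsilon_p$, as required.

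The only genuinely non-mechanical step is the separation transfer, namely showing that separation of $r$ and $\theta$ by $s$ in the Reeb quotient forces $C_s$ to intersect every curve from $x$ to $p$ in $X$. This reduces to continuity of $\pi$ and the path-based definition of \emph{separate}, so it is really bookkeeping rather than an obstacle. Everything else is a triangle-inequality calculation anchored on Hopf--Rinow and the two Lipschitz facts already in hand.
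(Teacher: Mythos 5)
Your proposal is correct and takes essentially the same route as the paper: witness $y\in C_s$ on a minimizing geodesic from $p$ to $x$, estimate $d(x,y)=d(p,x)-d(p,y)$ via $\|f-d(p,\cdot)\|_\infty\leq\epsilon_p$, and close with $f(r)-f(s)\leq d_f(r,s)$ from Proposition \ref{lipschitz}(ii). You merely spell out the separation transfer and the Hopf--Rinow step that the paper leaves implicit (and in doing so you avoid a sign typo that appears in the paper's displayed estimate, where $f(x)+f(y)$ should read $f(x)-f(y)$).
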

    \begin{proof}
    Since $s$ separates $r$ and $\theta$, $C_s$ separates $C_r$ from $p$. Let $x$ be a point in $C_r$, i.e. $\pi(x)=r$. Then there is a point $y$ in $C_s$ such that there is a length minimizing geodesic from $x$ to $p$ containing $y$. Hence we have
    \begin{align*}
    \mathrm{dist}(x,C_s) &\leq d(x,y) = d(x,p) - d(y,p) \\
    					 &\leq f(x)+f(y) + 2\epsilon_p \\
                         &=f(r)-f(s) + 2\epsilon_p \leq  d_f(r,s) + 2\epsilon_p \,\,\, \mbox{(By Proposition \ref{lipschitz}).}
    \end{align*}
    \end{proof}
    
    \begin{prop}\label{comparison2}\cite[Lemma 3.8.]{z06}
    Let $r,s$ be points in $X_f$ and $\gamma$ be a continuous curve between $r,s$ such that ${r,s}$ separates $\gamma$ from $\theta$. Then we have
    \[\mathrm{dist}(C_r,C_s) \leq \mathrm{length}(\gamma) + 4\epsilon_p. \]
    \end{prop}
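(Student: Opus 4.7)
The plan is to adapt the proof of Proposition~\ref{comparison1} by locating a single launch point $x^\star$ from which minimizing geodesics to $p$ pass through both $C_r$ and $C_s$. By Proposition~\ref{reeb}(ii), $C_\gamma := \pi^{-1}(\gamma)$ is connected, and the separation hypothesis in $X_f$ lifts to $X$: every continuous path from $C_\gamma$ to $p$ crosses $C_r \cup C_s$.

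Let $A$ (resp.\ $B$) denote the set of $x \in C_\gamma$ for which some length-minimizing geodesic from $x$ to $p$ passes through a point of $C_r$ (resp.\ $C_s$). By a standard Arzela--Ascoli compactness argument, $A$ and $B$ are closed in $C_\gamma$: given $x_n \to x$ in $A$, the corresponding minimizing geodesics (parametrized by arclength and rescaled to $[0,1]$) are uniformly Lipschitz, so a subsequence converges uniformly to a minimizing geodesic from $x$ to $p$, and the crossings with the closed set $C_r$ pass to the limit. Both sets are nonempty since $C_r \subset A$ and $C_s \subset B$, and $A \cup B = C_\gamma$ because every minimizing geodesic from a point of $C_\gamma$ to $p$ must meet $C_r \cup C_s$. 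Connectedness of $C_\gamma$ then forces $A \cap B \neq \emptyset$; pick $x^\star \in A \cap B$ together with associated geodesics through points $y_r \in C_r$ and $y_s \in C_s$.

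The calculation of Proposition~\ref{comparison1} applied to each of these two geodesics gives
\[
d(x^\star, y_r) \leq f(\pi(x^\star)) - f(r) + 2\epsilon_p, \qquad d(x^\star, y_s) \leq f(\pi(x^\star)) - f(s) + 2\epsilon_p.
\]
Set $q := \pi(x^\star) \in \gamma$. The 1-Lipschitz property of $f$ on $X_f$ (Proposition~\ref{lipschitz}(ii)) combined with the fact that on the Reeb graph the length of a curve equals the total variation of $f$ along it yields $|f(q)-f(r)| + |f(q)-f(s)| \leq \mathrm{length}(\gamma)$, since each summand is bounded by the length of the corresponding sub-path of $\gamma$ and the two sub-paths together make up $\gamma$. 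The triangle inequality then closes the argument:
\[
\mathrm{dist}(C_r, C_s) \leq d(y_r, x^\star) + d(x^\star, y_s) \leq \bigl(f(q)-f(r)\bigr) + \bigl(f(q)-f(s)\bigr) + 4\epsilon_p \leq \mathrm{length}(\gamma) + 4\epsilon_p.
\]
The main delicate point is the closedness of $A$ and $B$, which rests on compactness of the space of minimizing geodesics in the compact Riemannian manifold $X$; concentrating the launch point so that \emph{both} one-sided bounds of Proposition~\ref{comparison1} apply at the same $x^\star$ is precisely what lets the two $f$-variation estimates combine into a single copy of $\mathrm{length}(\gamma)$, rather than two.
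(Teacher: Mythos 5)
Your proof is correct and follows essentially the same route as the paper's: both define the sets (your $A,B$; the paper's $F_r,F_s$) of points in $C_\gamma$ admitting a minimizing geodesic to $p$ through $C_r$ or $C_s$, use connectedness of $C_\gamma$ together with closedness and covering to extract a common point $x^\star$, and then combine the two distance-to-$p$ estimates with the $\epsilon_p$-approximation and the $1$-Lipschitz property of $f$ on $X_f$. The only difference is cosmetic: you spell out the Arzel\`a--Ascoli argument for closedness, which the paper asserts without elaboration.
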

    \begin{proof}
    $C_\gamma$ is a connected compact subset of $X$. Define $F_r$ to be the subset of $C_\gamma$ consisting of points such that there is a length minimizing geodesic from that point to $p$ intersecting $C_r$. Define $F_s$ similarly. Note that $F_r,F_s$ are closed and they are nonempty since they contain $C_r,C_s$ respectively. Furthermore, they cover $C_\gamma$ since $C_r \cup C_s$ separates $C_\gamma$ and $p$. Hence their intersection is nonempty. Let $x \in F_r \cap F_s$. There exists $y \in C_r$ (resp. $y' \in C_s$) such that there is a length minimizing geodesic from $x$ to $p$ containing $y$ (resp. $y'$). Now we have
    \begin{align*}
    \mathrm{dist}(C_r,C_s) &\leq d(y,y') \\
    					   &\leq d(x,y) + d(x,y') = d(x,p)-d(y,p) + d(x,p)-d(y',p) \\
                           &\leq f(x)-f(y) + f(x) - f(y') + 4\epsilon_p = f(\pi(x))-f(r) + f(\pi(x))-f(s) + 4\epsilon_p\\
                           &\leq d_f(r,\pi(x))+d_f(\pi(x),s) + 4\epsilon_p \leq \mathrm{length}(\gamma) + 4\epsilon_p.
    \end{align*}
    \end{proof}
    
    \begin{prop}\label{split}\cite[Lemma 3.9]{z06}
Let $(G,\theta)$ be a pointed finite topological graph, whose vertices have degrees at most three. Let $\gamma$ be a simple path from a point $r$ to a point $s$ in $G$. Then there are points $r_1,\dots,r_n$, $n \leq 2\,\beta_1(G)+1$ on $\gamma$ such that $r_0$ separates $r$ from $\theta$, $r_n$ separates $s$ from $\theta$ and $\{r_i,r_{i+1}\}$ separates the interior of the segment of $\gamma$ from $r_i$ to $r_{i+1}$ from $\theta$.
	\end{prop}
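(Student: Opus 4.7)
My plan is to proceed by induction on the first Betti number $b := \beta_1(G)$, with the inductive step arranged so that removing a single non-tree edge decreases $b$ by $1$ at the cost of adding at most two new separation points.

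For the base case $b = 0$, the graph $G$ is a finite tree. The unique simple path from $\theta$ into $\gamma$ first meets $\gamma$ at a single point $q$, and taking $n = 1$ with $r_1 = q$ satisfies all the required conditions: $q$ separates $r$ and $s$ from $\theta$ in $G$ because the tree paths $r \to \theta$ and $s \to \theta$ both pass through $q$, and the pair conditions are vacuous. Since $1 \leq 2 \cdot 0 + 1$, the base case holds.

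For the inductive step ($b \geq 1$), pick a non-tree edge $e$ of $G$, which exists because $b \geq 1$. If $e \notin \gamma$, let $G' := G - e$, so $\beta_1(G') = b - 1$. Apply the induction hypothesis to $(G', \theta, \gamma)$ to obtain separation points $r'_1, \dots, r'_{n'}$ on $\gamma$ with $n' \leq 2b-1$. Restoring $e$ in $G$ may break the separation, since $e$ can provide a new path from the interior of some segment to $\theta$; any such new path must traverse both endpoints of $e$, and I would block it by inserting at most two additional separation points on $\gamma$, placed near the endpoints of $e$ (more precisely, at the $\gamma$-vertices through which the offending new path enters and leaves the compromised segment). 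This yields $n \leq n' + 2 \leq 2b + 1$. If instead $e \in \gamma$, then $e$ lies on some simple cycle $C \subset G$; write $P = C - e$ for the complementary arc. Replace $e$ in $\gamma$ by $P$ and simplify to obtain a simple path $\gamma^{\ast}$ from $r$ to $s$ in $G - e$. Apply the induction hypothesis to $(G - e, \theta, \gamma^{\ast})$ and translate the resulting separation points back to $\gamma$ via the natural correspondence (points on $\gamma^{\ast} \cap \gamma$ map to themselves; points on $P$ are replaced by one of the endpoints of $e$), again at a cost of at most two new points.

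The main obstacle is verifying rigorously that, in both cases, the edge modification costs only at most two extra separation points. This requires a careful structural analysis of the bridges of $\gamma$ in $G$, that is, the connected components of $G \setminus \mathrm{int}(\gamma)$ and the patterns by which they attach to $\gamma$, and it uses crucially the max-degree-$3$ hypothesis: each interior vertex of $\gamma$ carries at most one non-$\gamma$ edge (either a branch to another bridge or a chord of $\gamma$), which sharply limits how adding or removing the edge $e$ can reshape the bridge structure and guarantees the ``at most two extra points per reduction of $\beta_1$'' accounting that drives the inductive bound $n \leq 2\beta_1(G) + 1$.
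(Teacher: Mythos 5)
The paper does not prove Proposition~\ref{split}; it is stated with a citation to \cite[Lemma 3.9]{z06} and the proof is delegated to that reference, so there is no in-paper argument to compare against. I will therefore assess your proposal on its own merits.

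Your induction on $\beta_1(G)$ is a natural strategy and the accounting is set up correctly: a base case of $n=1$ for a tree plus at most $2$ new points per removed cycle edge would indeed yield $n\le 2\beta_1(G)+1$. The base case is correct: in a tree, the median $q$ of $r,s,\theta$ is the first meeting point of the $\theta$-to-$\gamma$ path, the unique paths $[r,\theta]$ and $[s,\theta]$ both pass through $q$, and $n=1$ suffices. However, the inductive step contains a genuine gap, and you acknowledge it yourself. The assertion that restoring the edge $e$ can always be repaired by inserting at most two new separation points on $\gamma$ is never established. You gesture at the bridge decomposition and the degree-$3$ bound, but you do not actually identify which points to add, show that after adding them the original separation pairs still separate their (now subdivided) segments, or verify that the new points can be interleaved with the existing ones in the linear order on $\gamma$ without further repairs cascading. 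A path that uses $e$ may enter and leave the ``$\theta$-region'' of $G-e$ in a way that is not local to the endpoints of $e$, so the phrase ``placed near the endpoints of $e$'' is not a construction. This is the entire technical content of the lemma, and it is left unproven.

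Two smaller issues. First, if you extend $\gamma$ (which is acyclic) to a spanning tree of $G$, every non-tree edge automatically satisfies $e\notin\gamma$; making this choice explicit would let you delete your second case entirely, which is good because that case (replacing $e$ by a complementary arc $P$, simplifying, and ``translating'' separation points on $P$ to one of the endpoints of $e$) is the most hand-waved portion and it is not clear the translated points retain the separation property. Second, the statement refers to points $r_0,\dots,r_n$ while quantifying $n\le 2\beta_1(G)+1$; you should fix a consistent indexing so the count in your induction unambiguously matches the bound being proved.
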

    
    \begin{prop}\label{distortion}
    Assume that the level sets of $\pi: X \to X_f$ have diameter less than or equal to a constant $C>0$. Then
    \[\mathrm{dis}(\pi) \leq (2\beta_1(X)+1)(C + 4\epsilon_p)+ |L-1|\,\diam(X). \]
    \end{prop}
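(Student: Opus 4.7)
The plan is to bound the two signed gaps $d(x,x') - d_f(\pi(x),\pi(x'))$ and $d_f(\pi(x),\pi(x')) - d(x,x')$ separately. The second (easier) gap is controlled immediately by Proposition \ref{lipschitz}(i), which tells us that $\pi$ is $L$-Lipschitz; hence $d_f(\pi(x),\pi(x')) - d(x,x') \leq (L-1)\,d(x,x') \leq |L-1|\diam(X)$ when $L \geq 1$, and is non-positive when $L \leq 1$, so this gap is bounded by $|L-1|\diam(X)$ in either case. This accounts for the final term of the stated bound.

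For the other gap, fix $x,x' \in X$ and set $r := \pi(x)$, $s := \pi(x')$. I would build a short chain in $X$ connecting $x$ to $x'$ that passes through a controlled number of fibers of $\pi$. Fix any $\eta > 0$ and choose a simple path $\gamma$ in $X_f$ from $r$ to $s$ of length at most $d_f(r,s) + \eta$. By Proposition \ref{degree} the Reeb graph $X_f$ has vertices of degree at most three, so Proposition \ref{split} applies, and combined with the inequality $\beta_1(X_f) \leq \beta_1(X)$ from Proposition \ref{reeb}(i) it produces points $r_1,\ldots,r_n$ along $\gamma$, with $n \leq 2\beta_1(X)+1$, such that $r_1$ separates $r$ from $\theta$, $r_n$ separates $s$ from $\theta$, and each $\{r_i,r_{i+1}\}$ separates the interior of the subsegment $\gamma|_{[r_i,r_{i+1}]}$ from $\theta$.

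Next I would chain together the two comparison propositions. Proposition \ref{comparison1} applied at $r_1$ and at $r_n$ yields points in $C_{r_1}$ and $C_{r_n}$ within distances $d_f(r,r_1) + 2\epsilon_p$ and $d_f(s,r_n) + 2\epsilon_p$ of $x$ and $x'$, respectively. For each $i = 1,\ldots,n-1$, Proposition \ref{comparison2} applied to the subsegment of $\gamma$ from $r_i$ to $r_{i+1}$ yields a pair of points, one in $C_{r_i}$ and one in $C_{r_{i+1}}$, at distance at most $\mathrm{length}(\gamma|_{[r_i,r_{i+1}]}) + 4\epsilon_p$. Finally, each fiber $C_{r_i}$ is connected (Proposition \ref{reeb}(ii)) and has diameter at most $C$ by hypothesis, so any two selected points inside the same fiber lie within $C$ of one another. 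Summing the resulting triangle inequalities along the chain and then letting $\eta \to 0$ yields
\[ d(x,x') \;\leq\; d_f(r,s) + nC + 4n\,\epsilon_p \;\leq\; d_f(r,s) + (2\beta_1(X)+1)(C + 4\epsilon_p), \]
where the coefficient $4n$ on $\epsilon_p$ arises as $2 + 4(n-1) + 2$ from two endpoint hops and $n-1$ middle hops. Combined with the easy direction, this would complete the proof.

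The main source of difficulty is the bookkeeping in this last chain: one must verify that each fiber $C_{r_i}$ is visited exactly once so that the within-fiber diameter $C$ is charged $n$ times (not more), and that the two endpoint applications of Proposition \ref{comparison1} contribute $2\epsilon_p$ each while the $n-1$ intermediate applications of Proposition \ref{comparison2} contribute $4\epsilon_p$ each, so that the $\epsilon_p$-coefficient comes out to $4n$ rather than some worse multiple of $n$.
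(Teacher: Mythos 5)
Your proposal is correct and takes essentially the same route as the paper's proof: bound the easy gap via Proposition \ref{lipschitz}(i), then take a near-geodesic simple path $\gamma$ in $X_f$, apply Proposition \ref{degree} and Proposition \ref{split} to obtain $n \leq 2\beta_1(X)+1$ separating points, and chain Proposition \ref{comparison1} at the endpoints with Proposition \ref{comparison2} on the middle segments, charging one fiber-diameter $C$ per $r_i$, to get $d(x,x') \leq d_f(r,s) + nC + 4n\epsilon_p$. The only cosmetic difference is your $\eta$-approximation and limit, where the paper simply takes a length-minimizing curve directly (which in a compact metric graph exists and may be taken simple), and your bookkeeping of the $4n\epsilon_p$ coefficient as $2 + 4(n-1) + 2$ matches the paper exactly.
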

    \begin{proof}
    Let $x,y$ be points in $X$ and $r,s$ be their images under $\pi$ respectively. By Proposition \ref{lipschitz} we have
    \[d_f(r,s)-d(x,y) \leq (L-1)\,d(x,y) \leq |L-1|\,\diam(X). \]
    Let $\gamma$ be a length minimizing curve between $r,s$. By Proposition \ref{degree}, $X_f$ is a graph satisfying the conditions of Proposition \ref{split}, hence we can choose $r_1,\dots,r_n$ as in Proposition \ref{split}. Let $\gamma_i$ denote the part of $\gamma$ between $r_i,r_{i+1}$ for $1 \leq i < n$. Note that $d_f(r_i,r_{i+1})=\mathrm{length}(\gamma_i)$. Now, by Proposition \ref{comparison1} and \ref{comparison2} we have
    \begin{align*}
    d_f(r,s)&=\mathrm{length}(\gamma) = d_f(r,r_0) + \sum_{i=1}^{n-1} \mathrm{length}(\gamma_i) + d_f(r_n,s) \\
    	  &\geq \mathrm{dist}(x,C_{r_1})-2\epsilon_p + \sum_{i=1}^{n-1}(\mathrm{dist}(C_{r_i},C_{r_{i+1}})-4\epsilon_p) + \mathrm{dist}(C_{r_n},s)-2\epsilon_p \\
          &\geq d(x,y) - n\,C - 4\,n\,\epsilon_p \geq d(x,y) - (2\beta_1(X)+1)(C+4\epsilon_p).
    \end{align*}
    \end{proof}

\section{Thickness}\label{invariant}

Given $k\in\mathbb{N}$,  we define the $k^{th}$-\textit{thickness} of a path connected metric space $A$ by
\[\thick{A}^k:= \frac{\mu^k(A)}{\big(\diam(A)\big)^k}. \]
Define it as $1$ if the diameter is $0$. Note that low values of this invariant intuitively indicate that the space has very thin parts: That is, parts with large diameter but small $k^{th}$ dimensional Hausdorff measure. For a metric space with several path components, we define the thickness by taking the minimum thickness across its components. We will elaborate on the interpretation of the thickness invariants thorough the examples that follow and the situation described in section \ref{sec:calc}.

\begin{example}\label{sphere}
Let $s_n$ be the total $n^{th}$ Hausdorff measure of the standard $n$-dimensional unit sphere. It is known that $$s_n=\frac{2 \pi^{(n+1)/2}}{\Gamma((n+1)/2)}.$$ Let $\mathbb{S}^n(r)$ be an $n$-dimensional sphere of radius $r$ with the standard Riemannian metric. Then $\mu^n(\mathbb{S}^n(r))=s_n r^n$ and $\diam(\mathbb{S}^n(r))=\pi \,r$. Therefore, $$\thick{\mathbb{S}^n(r)}^n=\frac{s_n}{\pi^n}=\frac{2 \pi^{-(n-1)/2}}{\Gamma((n+1)/2)},$$ which is independent of $r$.
\end{example}

\begin{remark}
$\thick{A}^1 \geq 1$ since length is always greater than or equal to the diameter.
\end{remark}

Given a smooth function $f:X \to \R$ on an $n$-dimensional manifold, we define
\[\thick{f}:=\inf_{t \in \mathrm{range}(f)} \thick{f^{-1}(t)}^{n-1}. \]

\begin{example}\label{constant}
If $f$ is the constant function, then the numerator in the definition of thickness invariant on the unique level set becomes infinity, hence $\thick{f}=\infty$. 
\end{example}

\begin{remark}\label{surface}
If $X$ is a surface then $\thick{f} \geq 1$ for each $f: X \to \R$.
\end{remark}

\subsection{A calculation: Thickened filtered graphs}\label{sec:calc}

Here we first introduce \textit{thickened filtered graphs} and then find a lower bound for the thickness of this kind of filtered spaces. We first need to define how to \textit{glue} filtered spaces.

\begin{figure}
	\centering
    \includegraphics[width=0.3\textwidth]{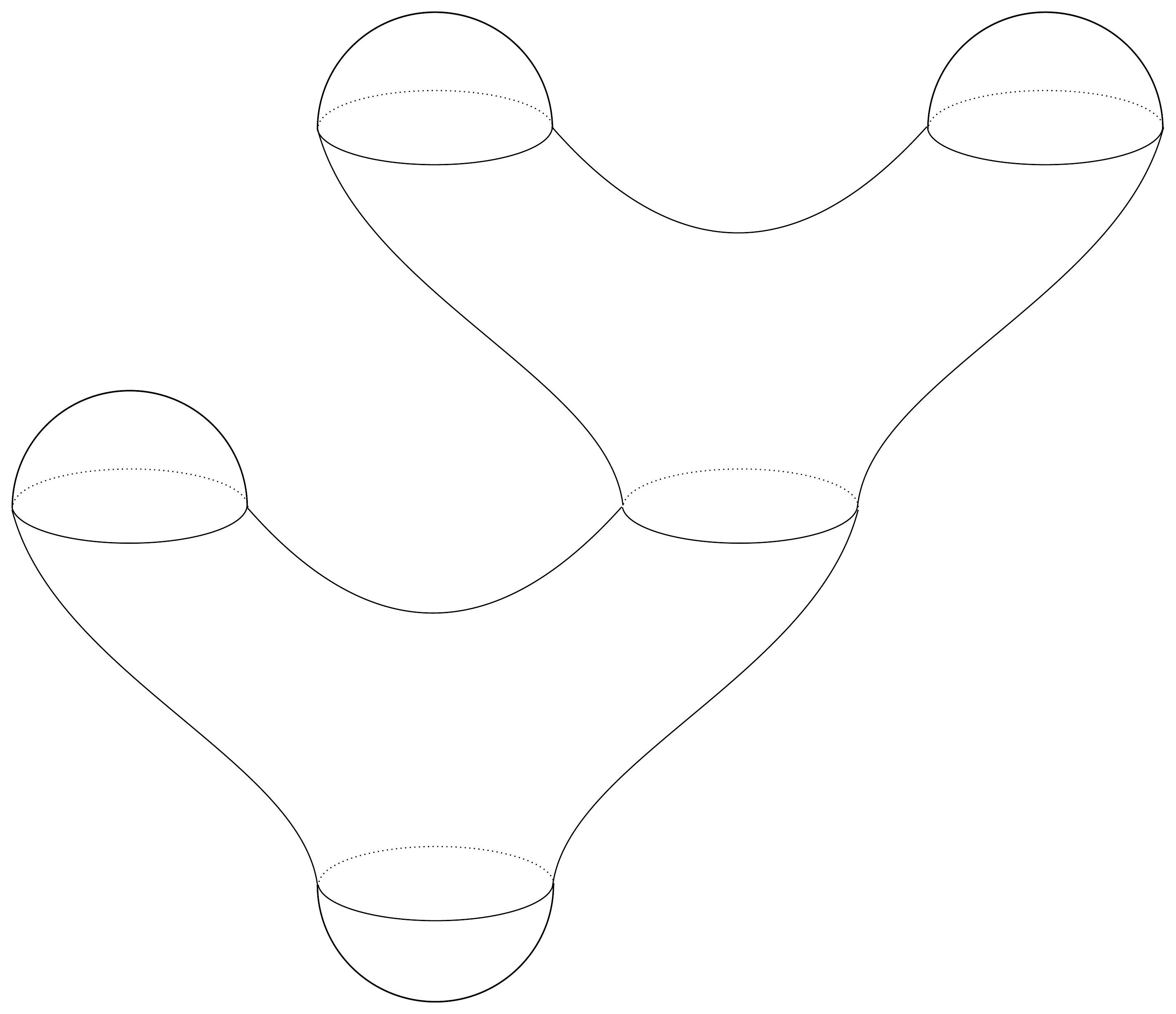}
\caption{A 2-dimensional thickened filtered graph.}
\label{fig:graph}
	\end{figure} 

A compact filtered space $f:X \to \R$ has a maximum and minimum level sets which we denote by $X_t,X_b$ respectively. Let us denote the value of $f$ on $X_t,X_b$ by $t_f,b_f$ respectively. If we are given two filtered spaces $f: X\to \R$ and $g: Y \to \R$ together with a homeomorphism $\varphi: X_t \simeq Y_b$, we can glue this two topological spaces through $\varphi$ and then define a filtration $h: X \coprod_\varphi Y \to \R$ by $h|_X:= f$ and $h|_Y := g -b_g + t_f$. See how the gluing is done along circles in Figure \ref{fig:graph}. Furthermore, if an addition $X$ and $Y$ are metric spaces endowed with metrics $d_X$ and $d_Y$, respectively, and the homeomorphism $\varphi:X_t\rightarrow Y_b$ is an isometry, then we define a metric $d$ on $X \coprod_\varphi Y$ as follows. The restriction of $d$ to $X$ is $d_X$, the restriction of $d$ to $Y$ is $d_Y$ and for all $x \in X, y \in Y$, $$d(x,y):= \inf_{z \in X_t} \big(d_X(x,z)+d_Y(\varphi(z),y)\big).$$ 

Note that if $X,Y$ are length spaces, then so is $X \coprod_\varphi Y$. We then have:

\begin{lemma}\label{gluing-inv}
Let $h:Z \to \R$ be the filtered metric space which is obtained by gluing filtered metric spaces $f:X \to \R$, $g: Y \to \R$, described as above. Then $\thick{h}=\min(\thick{f},\thick{g})$.
\end{lemma}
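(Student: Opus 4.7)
The plan is to reduce the claim to a pointwise identification of level sets: for each $t$ in the range of $h$, I want to exhibit $h^{-1}(t)$, equipped with the subspace metric inherited from $(Z,d)$, as isometric to a level set of $f$ or of $g$. Once this identification is in place, both the $(n{-}1)$-dimensional Hausdorff measure and the diameter of $h^{-1}(t)$ match the corresponding quantities on the $X$- or $Y$-side, so $\thick{h^{-1}(t)}^{n-1}$ coincides with $\thick{f^{-1}(t)}^{n-1}$ or $\thick{g^{-1}(t-t_f+b_g)}^{n-1}$; the same remark applies component by component if level sets have several path components.

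I would split the identification into three cases. For $t<t_f$ the preimage $h^{-1}(t)$ sits in $X\setminus X_t$ and coincides set-theoretically with $f^{-1}(t)$; since by construction the restriction of $d$ to $X$ equals $d_X$, the subspace metric on $h^{-1}(t)$ is exactly $d_X$ restricted to $f^{-1}(t)$. For $t>t_f$ the symmetric argument on $Y$ uses $d|_{Y\times Y}=d_Y$ together with the shift $t\mapsto t-t_f+b_g$. The only case that touches the gluing is $t=t_f$, in which $h^{-1}(t_f)$ is the glued locus $X_t=Y_b$; because $\varphi$ is an isometry, the two candidate metrics $d_X|_{X_t}$ and $d_Y|_{Y_b}$ agree under $\varphi$, and both coincide with $d|_{X_t\times X_t}$.

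With the pointwise identification in hand, the infimum defining $\thick{h}$ splits along $\mathrm{range}(h)=\mathrm{range}(f)\cup\bigl(\mathrm{range}(g)-b_g+t_f\bigr)$, the two pieces overlapping only at $t_f$, and yields
\[\thick{h}=\min\Bigl(\inf_{t\in\mathrm{range}(f)}\thick{f^{-1}(t)}^{n-1},\ \inf_{s\in\mathrm{range}(g)}\thick{g^{-1}(s)}^{n-1}\Bigr)=\min(\thick{f},\thick{g}).\]
I expect the only delicate point to be the metric identification at the shared level set $t=t_f$: one must verify that the cross-distance infimum defining $d$ does not shorten distances along this level, which is precisely where the hypothesis that $\varphi$ is an isometry enters (cross terms of the form $d_X(x,z)+d_Y(\varphi(z),y)+d_Y(y,\varphi(z'))+d_X(z',x')$ are bounded below by $d_X(x,x')$ after applying the triangle inequality in $Y$ and the isometry property of $\varphi$). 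Everything else is essentially bookkeeping using the piecewise definitions of $h$ and $d$.
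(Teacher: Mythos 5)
Your proposal is correct and takes essentially the same approach as the paper, whose proof consists of the single observation that the level sets of $h$ are exactly (isometric copies of) the level sets of $f$ and $g$, so the infimum defining $\thick{h}$ splits as the minimum of the two infima. Your version simply makes explicit the case analysis $t<t_f$, $t=t_f$, $t>t_f$ and the metric bookkeeping (that $d|_{X\times X}=d_X$, $d|_{Y\times Y}=d_Y$, and the isometry $\varphi$ makes the identification at the shared level set unambiguous), all of which is already built into the paper's definition of the glued metric.
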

\begin{proof}
Recall that $T_f$ is defined through taking a minimum through the levels sets of $f$. The result follows from the fact that the level sets of $h$ can be seen as the union of the level sets of $f,g$.
\end{proof}

Since the thickness of a filtered metric space is defined as a minimum along its connected components, we can do the gluing through a collection of connected components of the maximum level set of the first filtered space with a collection of connected components of the minimum level set of the second filtered space. Lemma \ref{gluing-inv} still holds.

\begin{figure}
	\centering
    \includegraphics[width=0.3\textwidth]{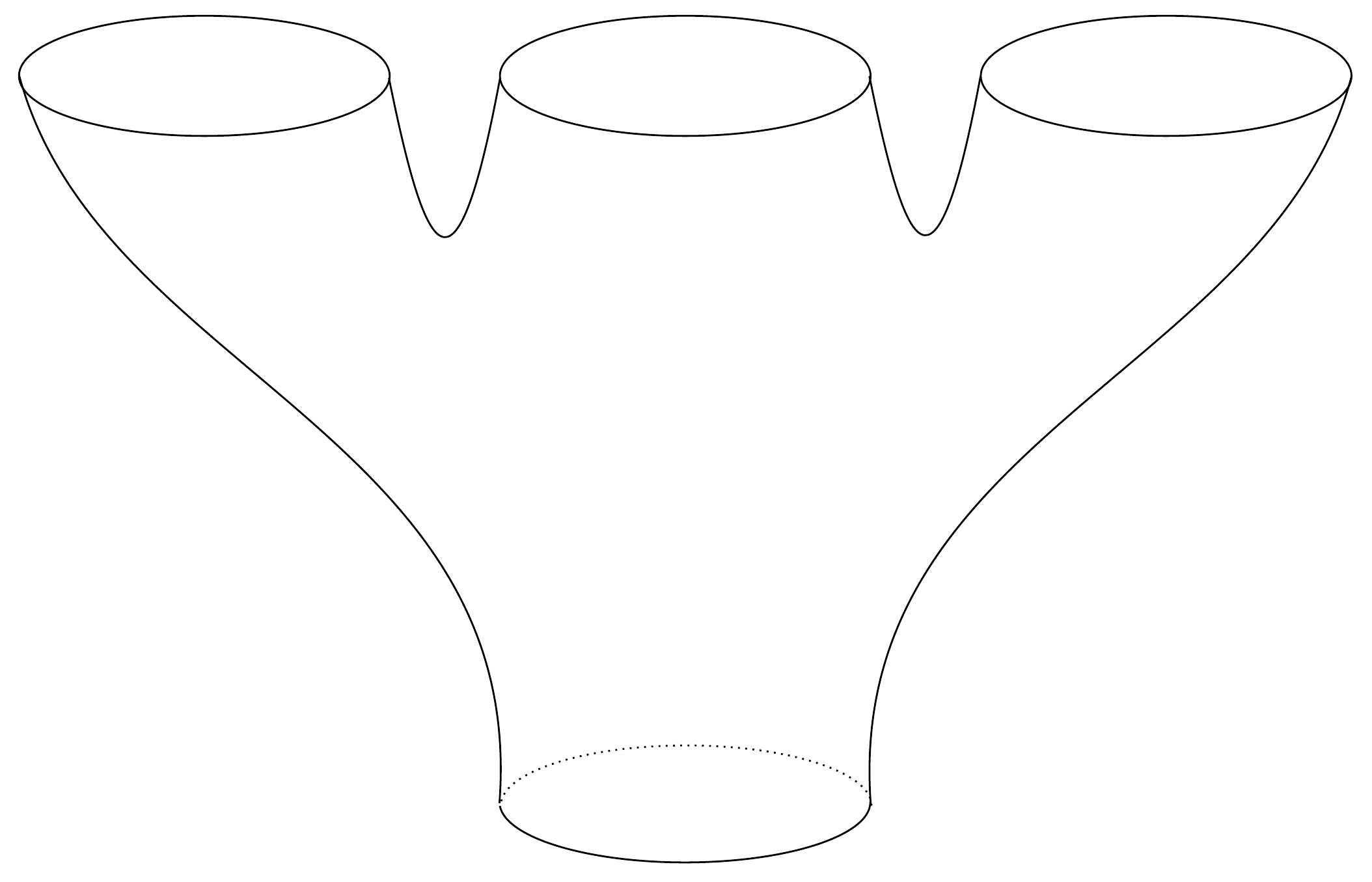}
\caption{A 2-dimensional thickened 3-fork.}
\label{fig:fork}
	\end{figure} 

A $k$-\textit{fork} is a directed graph with $k+2$ vertices $v_1,\dots,v_k,w,w'$ where the directed edges are given by $(w',w)$ and $(w,v_i)$ for all $i$. Note that its geometric realization can be embedded into $\R^2$ with coordinates $(x,y)$ where the height $y$ is increasing along the directed edges and the points level set are collinear. We call such a realization a \textit{filtered} $k$-fork. It can be thickened and smoothened in $\R^{n+1}$ with coordinates $(x=x_1,\dots,x_n,y)$ such that it becomes an $n$-dimensional submanifold of $\R^{n+1}$ and the level sets with respect to the height function $h$ is the boundary of the union of balls of same radius in $\R^n$ with coordinates $(x_1,\dots,x_n)$ where the centers of the balls are given by the corresponding level sets of the fork. We call such a metric space  an \textit{n-dimensional filtered thickened k-fork} (see Figure \ref{fig:fork}). 

\begin{lemma}\label{fork-inv}
Let $s_n$ be the total $n^{th}$ Hausdorff measure of the standard $n$-dimensional unit sphere. Then for any $n$-dimensional filtered thickened $k$-fork $f: F \to \R$ we have $$\thick{f} \geq \frac{s_{n-1}}{(k\,\pi)^{n-1}} = \frac{\thick{\mathbb{S}^{n-1}}^{n-1}}{k^{n-1}}.$$
\end{lemma}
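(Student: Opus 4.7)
My plan is to establish a uniform lower bound of $s_{n-1}/(k\pi)^{n-1}$ on the $(n{-}1)$-thickness of every connected component of every level set $f^{-1}(t)$, from which the lemma follows by taking the infimum over $t$ and the minimum over components. By construction, each $f^{-1}(t)$ is the boundary in $\R^n$ of a union $\bigcup_{i=1}^m B(c_i,r)$ of at most $m\le k$ balls of a common radius $r>0$ whose centers are collinear on, say, the $x_1$-axis. A connected component $C$ of $f^{-1}(t)$ is then the boundary of a maximal cluster $\bigcup_{i\in I}B(c_i,r)$; ordering the centers in $I$ along the $x_1$-axis, consecutive pairs are at distance at most $2r$ (so the cluster is connected), distinct clusters are separated by more than $2r$, and $m':=|I|\le k$.

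For the lower bound on $\mu^{n-1}(C)$ I plan to use a hemisphere argument. Let $c_\ell,c_\rho$ denote the centers in $I$ of smallest and largest $x_1$-coordinate, and consider the left hemisphere $H_\ell:=\{x\in\partial B(c_\ell,r):x\cdot e_1<c_\ell\cdot e_1\}$. For any other $c_j\in I$, collinearity on the $x_1$-axis together with $c_j\cdot e_1\ge c_\ell\cdot e_1$ yields $|x-c_j|\ge|x-c_\ell|=r$ on $H_\ell$; for $c_j\notin I$, the inter-cluster gap of more than $2r$ gives $|x-c_j|>r$. Hence $H_\ell$ sits in $\partial(\bigcup_i B(c_i,r))$, and since $H_\ell\subset\partial B(c_\ell,r)\subset\partial(\bigcup_{i\in I}B(c_i,r))$, one concludes $H_\ell\subseteq C$. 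A symmetric argument gives the right hemisphere $H_\rho\subseteq C$, and these are disjoint (lying in opposite half-spaces) when $|I|>1$; when $|I|=1$ they together exhaust the full sphere. In either case,
\[\mu^{n-1}(C)\ge s_{n-1}\,r^{n-1}.\]

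For the upper bound on $\diam(C)$ I exploit the rotational symmetry of $C$ around the $x_1$-axis (or the mirror symmetry across it, for $n=2$). The component $C$ pinches to a single point at each of the axial extrema $x_1=c_\ell\cdot e_1-r$ and $x_1=c_\rho\cdot e_1+r$, so a path from any $x\in C$ to such an extremum followed by a path from the extremum to any $y\in C$ is admissible with no ``rotation'' cost incurred at the extremum itself. The length of an axial path from a point of $C$ to an extremum is bounded by the length of the planar outline of the cluster in the half-plane through that point containing the axis. This outline is a concatenation of at most $m'$ arcs, one on each sphere $\partial B(c_i,r)$, each of angular extent at most $\pi$ and hence length at most $\pi r$; so the full outline has length at most $m'\pi r$. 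Choosing the extremum (left or right) that minimizes the total, the combined length of the two axial pieces does not exceed the full outline length, whence $\diam(C)\le m'\pi r\le k\pi r$.

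Combining,
\[\thick{C}^{n-1}=\frac{\mu^{n-1}(C)}{\diam(C)^{n-1}}\ge\frac{s_{n-1}\,r^{n-1}}{(k\pi r)^{n-1}}=\frac{s_{n-1}}{(k\pi)^{n-1}}=\frac{\thick{\mathbb{S}^{n-1}}^{n-1}}{k^{n-1}},\]
which holds for every component of every level set and gives the lemma. The main subtlety I anticipate is the smoothening step in the construction, which slightly rounds the corners where consecutive spheres meet: I plan either to take the smoothing to be arbitrarily small so that $\mu^{n-1}(C)$ and $\diam(C)$ change negligibly, or to verify directly that both estimates survive such perturbations — in particular that the hemisphere argument is robust because it exploits only the geometry in a region bounded away from the intersection loci, and that the rotational-symmetry argument extends because the smoothing can be chosen to respect the $O(n-1)$-symmetry around the $x_1$-axis.
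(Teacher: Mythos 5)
Your proof is correct and shares the paper's core strategy: the measure of a component $C$ is bounded below by $s_{n-1}r^{n-1}$ using the two ``end'' hemispheres of the cluster, the diameter is bounded above by $k\pi r$, and the quotient gives the claim. The hemisphere argument for the measure is essentially identical to the paper's. Where you genuinely diverge is the diameter bound: the paper covers $C$ by at most $k$ closed pieces, one on each sphere $\partial B(p_i,r)$, notes that each piece has diameter at most $\diam(\mathbb{S}^{n-1}(r)) = \pi r$, and then simply invokes Proposition~\ref{closedCover} to conclude $\diam(C)\le k\pi r$. You instead construct explicit paths through the axial extrema and use the identity $a_x+b_x = a_y+b_y = \text{outline length}$ to show that at least one of the two through-extremum routes has length at most the outline length $\le k\pi r$. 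Both give the same bound. The paper's version is shorter since Proposition~\ref{closedCover} is already in hand; yours is more self-contained and has the virtue of making explicit both where the paths live and how the rotational symmetry of the smoothening is used -- points the paper leaves tacit. One small caveat worth keeping in mind: the relevant diameter is the one induced from the ambient manifold $F$ (this is what is used in Lemma~\ref{diamSum}), so bounding it by lengths of paths lying in $C$, as you do, is the safe direction; the paper's appeal to $\diam(\mathbb{S}^{n-1}(r))=\pi r$ is implicitly doing the same thing.
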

\begin{proof}
By definition of a filtered thickened fork, a level set of $f$ is the boundary of the union of balls of radius $r$ with collinear centers in $\R^n$. Let $\mathcal{S}$ be a connected component of such a level set, and let $p_1,\dots,p_m$ be the centers given in a linear order. Note that by linearity, $\mathcal{S}$ contains the two half $(n-1)$-dimensional spheres with centers $p_1$ and $p_m$. Hence $\mu^{n-1}(\mathcal{S}) \geq \mu^{n-1}(\mathbb{S}^{n-1}(r))$. Since $m \leq k$, $\diam(\mathcal{S}) \leq k \, \diam(\mathbb{S}^{n-1}(r))$ by Proposition \ref{closedCover}. Therefore
$$\thick{\mathcal{S}}^{n-1} \geq \frac{\mu^{n-1}(\mathbb{S}^{n-1}(r))}{\big(k \, \diam(\mathbb{S}^{n-1}(r))\big)^{n-1}} = \frac{s_{n-1}}{(k\,\pi)^{n-1}} \,\, \mbox{(see Example \ref{sphere})}. $$
\end{proof}

\begin{figure}
	\centering
    \includegraphics[width=0.3\textwidth]{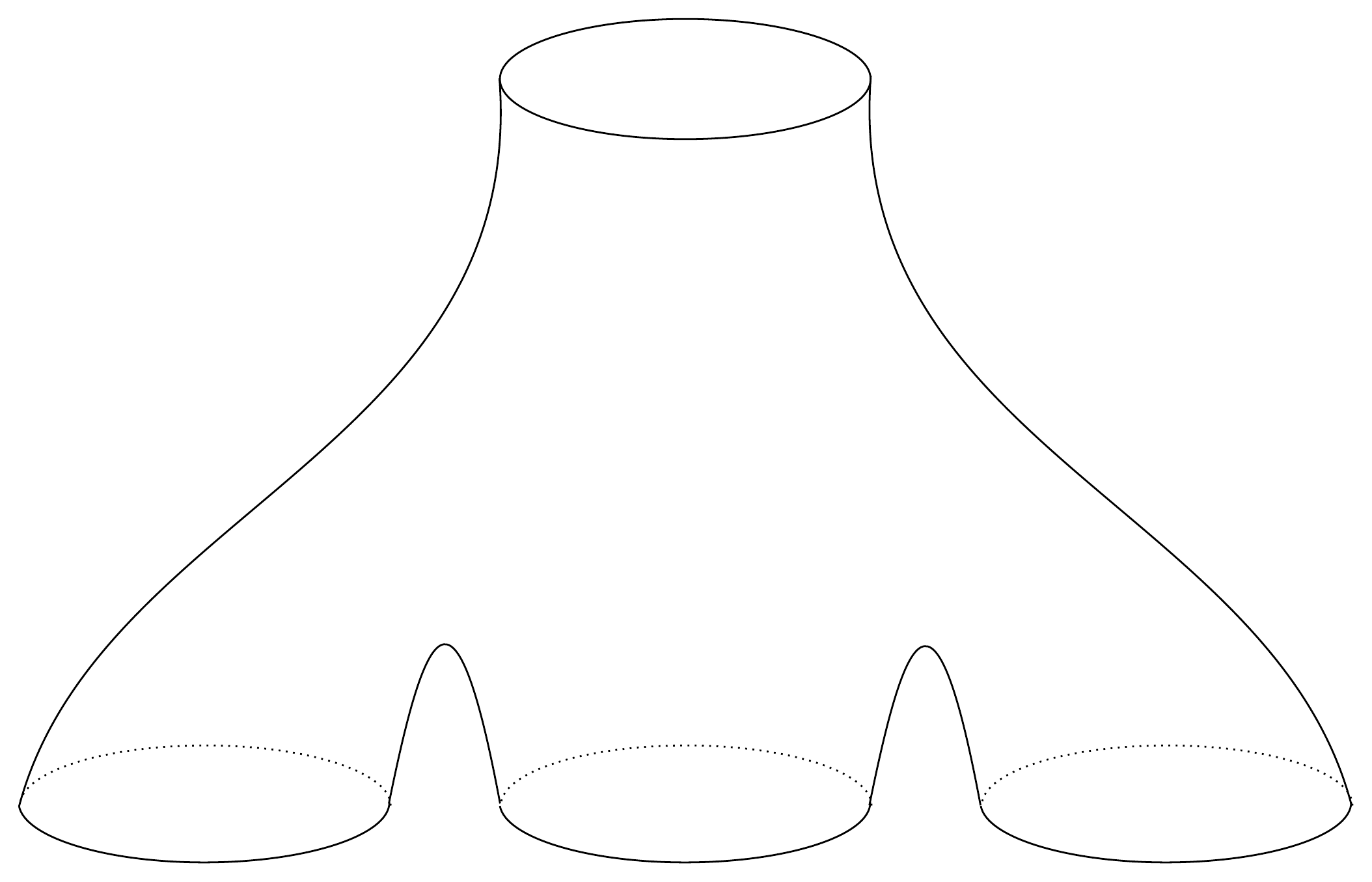}
\caption{An inverse 2-dimensional thickened fork.}
\label{fig:inverse}
	\end{figure} 

An \textit{inverse} filtered thickened fork is defined similarly but with the negative filtration (see Figure \ref{fig:inverse}). Negative filtration of a filtered space $f: X \to \R$ is defined as $-f:X \to \R$. Lemma \ref{fork-inv} still holds for this case, since taking the negative filtration does not change the thickness invariant.

\begin{dfn}(Thickened graph)
An $n$-dimensional filtered thickened graph is obtained by gluing $n$-dimensional thickened forks and inverse forks, and filtered half-spheres filtered by the height function in $\R^{n+1}$ (see Figure \ref{fig:graph}).
\end{dfn}
The following proposition follows from Lemma \ref{gluing-inv} and Lemma \ref{fork-inv}.
\begin{prop}\label{graphinv}
Let $f: \mathcal{G} \to \R$ be an $n$-dimensional filtered thickened graph and let $K$ be the maximal $k$ where a $k$-fork is used in the construction of $\mathcal{G}$. Then 
$$\thick{f} \geq \frac{\thick{\mathbb{S}^{n-1}}^{n-1}}{K^{n-1}}.$$
\end{prop}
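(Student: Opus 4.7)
The plan is to combine the two lemmas by exploiting the inductive construction of a filtered thickened graph. By definition, $\mathcal{G}$ is built by iteratively gluing three kinds of elementary pieces along isometries of their top/bottom level sets: $n$-dimensional filtered thickened $k$-forks, inverse thickened $k$-forks, and filtered half-spheres (with the standard height function in $\R^{n+1}$). Lemma \ref{gluing-inv} (together with the remark following it, allowing gluing on subsets of connected components) gives that for any such gluing the thickness of the result is the minimum of the thicknesses of the pieces. Applying this inductively to the construction of $\mathcal{G}$, we obtain
\[\thick{f} \;=\; \min_{P} \thick{f_P},\]
where the minimum ranges over the elementary pieces $P$ (with filtration $f_P$) used to build $\mathcal{G}$.

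First I would bound the thickness of each kind of piece. For a filtered or inverse filtered thickened $k$-fork $P$, Lemma \ref{fork-inv} gives directly $\thick{f_P}\geq s_{n-1}/(k\pi)^{n-1}=\thick{\mathbb{S}^{n-1}}^{n-1}/k^{n-1}$ (the inverse case reduces to the same bound since negating the filtration does not change thickness). For a filtered half-sphere $P$ (the upper or lower hemisphere of some $\mathbb{S}^n(r)\subset \R^{n+1}$ filtered by the last coordinate), each level set is an $(n-1)$-sphere $\mathbb{S}^{n-1}(r')$ for some $r'\leq r$, and by Example \ref{sphere} its $(n-1)^{\mathrm{st}}$ thickness equals $\thick{\mathbb{S}^{n-1}}^{n-1}$ independently of $r'$. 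Thus every half-sphere piece satisfies $\thick{f_P}\geq \thick{\mathbb{S}^{n-1}}^{n-1}$, which is at least $\thick{\mathbb{S}^{n-1}}^{n-1}/K^{n-1}$.

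Next I would take the minimum. Each fork piece has arity $k\leq K$, so $\thick{f_P}\geq \thick{\mathbb{S}^{n-1}}^{n-1}/K^{n-1}$ for every fork or inverse fork piece as well. Combining these bounds across all elementary pieces yields
\[\thick{f} \;=\; \min_P \thick{f_P} \;\geq\; \frac{\thick{\mathbb{S}^{n-1}}^{n-1}}{K^{n-1}},\]
which is exactly the claimed inequality.

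There is no substantial obstacle here since the work has been done in the two prior lemmas; the only point requiring a line of verification is that the half-sphere pieces satisfy the bound as well, which is handled by the scale-invariance calculation in Example \ref{sphere}. If one wanted full rigor, a brief induction on the number of pieces used in the construction of $\mathcal{G}$ (with base case a single piece and inductive step an application of Lemma \ref{gluing-inv}) would make the reduction explicit, but this is essentially bookkeeping.
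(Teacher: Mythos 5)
Your proof is correct and follows essentially the same route as the paper, which states that the proposition follows directly from Lemma \ref{gluing-inv} and Lemma \ref{fork-inv}. You simply make the bookkeeping explicit (induction over pieces, the inverse-fork reduction, and the half-sphere case via Example \ref{sphere}), all of which the paper leaves implicit.
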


\section{The diameter of a fiber of the Reeb quotient map}

Let $X$ be a compact connected $n$-dimensional Riemannian manifold, $n \geq 2$ and $f: X \to \R$ be a $L$-Lipschitz smooth function. For any $p \in X$, let $\epsilon_p:=||f - d(p,\cdot) ||_\infty$.

    \begin{lemma}\label{diamSum}\footnote{Cf. \cite[Lemma 2.2]{z06}.}
    Let $t_0$ be a regular value of $f$ and let $p \in X$ be a point such that $t_0>f(p)$. Let $t>t_0$ and $A$ be a connected component of $f^{-1}(t)$. Then we have
    \[\mu^{n-1}(f^{-1}(t_0)) \geq \frac{\thick{f}}{(\beta_1(X)+1)^{n-2}} \big( \diam(A) - 2(\beta_1(X)+1)(t-t_0 + 2\epsilon_p) \big)^{n-1}. \]
    \end{lemma}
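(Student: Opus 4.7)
The plan is to use Proposition \ref{seperate} to isolate a small family of components of $f^{-1}(t_0)$ that every length-minimizing geodesic from $A$ to $p$ must cross, and then convert a diameter estimate obtained from Proposition \ref{closedCover} into a measure estimate via the thickness hypothesis. First I pick a regular value $s\in(f(p),t_0)$ and a connected component $A'$ of $f^{-1}(s)$ lying in the connected component $X_p$ of $\{f<t_0\}$ containing $p$; such an $A'$ exists because $\partial X_p\subset f^{-1}(t_0)$ forces $f^{-1}(s)\cap X_p$ to be both open and closed in $f^{-1}(s)$, and it is nonempty by the intermediate value theorem on the connected set $X_p$. Proposition \ref{seperate} then produces components $Y_1,\ldots,Y_k$ of $f^{-1}(t_0)$ with $k\le\beta_1(X)+1$ separating $A$ from $A'$. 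Since $A'$ can be joined to $p$ by a path inside $X_p\subset\{f<t_0\}$ that avoids $f^{-1}(t_0)$, the points $p$ and $A'$ lie in the same component of $X\setminus\bigcup_i Y_i$, so every continuous path from $A$ to $p$ must cross some $Y_i$.

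For each $a\in A$, I take a length-minimizing geodesic from $a$ to $p$ and a point $y_a$ on it lying in some $Y_{i(a)}$. Because $y_a$ is on a shortest geodesic and $\|f-d(p,\cdot)\|_\infty\le\epsilon_p$,
\[d(a,y_a)=d(a,p)-d(y_a,p)\le(f(a)+\epsilon_p)-(f(y_a)-\epsilon_p)=t-t_0+2\epsilon_p.\]
Thus the closed sets $A_i:=\{a\in A:\mathrm{dist}(a,Y_i)\le t-t_0+2\epsilon_p\}$ form a finite cover of the connected space $A$, so Proposition \ref{closedCover} together with the triangle inequality yields
\[\diam(A)\le\sum_{i=1}^k\diam(A_i)\le\sum_{i=1}^k\diam(Y_i)+2k(t-t_0+2\epsilon_p),\]
whence $\sum_{i=1}^k\diam(Y_i)\ge\diam(A)-2(\beta_1(X)+1)(t-t_0+2\epsilon_p)$.

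Finally, since each $Y_i$ is a connected component of $f^{-1}(t_0)$, the definition of thickness gives $\diam(Y_i)^{n-1}\le\mu^{n-1}(Y_i)/\thick{f}$. The power-mean inequality $\bigl(\sum_{i=1}^k a_i\bigr)^{n-1}\le k^{n-2}\sum_{i=1}^k a_i^{n-1}$ (valid for $n\ge 2$) then yields
\[\Bigl(\sum_{i=1}^k\diam(Y_i)\Bigr)^{n-1}\le\frac{k^{n-2}}{\thick{f}}\sum_{i=1}^k\mu^{n-1}(Y_i)\le\frac{(\beta_1(X)+1)^{n-2}}{\thick{f}}\,\mu^{n-1}(f^{-1}(t_0)),\]
which combined with the previous estimate gives the claim (the case in which the parenthesis on the right-hand side of the lemma is negative is trivial). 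The main obstacle is the topological step in the first paragraph: one must guarantee that the separating components $Y_i$ produced by Proposition \ref{seperate} are precisely the ones that shortest geodesics from $A$ to $p$ traverse, rather than some other uncontrolled components of $f^{-1}(t_0)$. The judicious choice of $A'$ inside the $p$-component of $\{f<t_0\}$ is exactly what forces this.
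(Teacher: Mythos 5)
Your argument is correct and follows essentially the same route as the paper: separate $A$ from $p$ by $\le\beta_1(X)+1$ components of $f^{-1}(t_0)$ via Proposition \ref{seperate}, cover the connected set $A$ by closed sets $A_i$ whose diameters are controlled by $\diam(Y_i)+2(t-t_0+2\epsilon_p)$, apply Proposition \ref{closedCover}, and pass from $\sum_i\diam(Y_i)$ to $\mu^{n-1}(f^{-1}(t_0))$ using the thickness bound and the convexity/power-mean inequality. Your auxiliary construction of $A'$ in the $p$-component of $\{f<t_0\}$ is a careful (if slightly redundant) way to fit the hypotheses of Proposition \ref{seperate}, which the paper invokes directly by taking $s=f(p)$ and the component of $f^{-1}(f(p))$ containing $p$; likewise your distance-ball definition of $A_i$ is an inessential variant of the paper's geodesic-through-$Y_i$ definition and yields the same estimate.
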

    \begin{proof}
    By Proposition \ref{seperate} there are components $Y_1,\cdots,Y_k$, $k \leq \beta_1(X)+1$ of $Y:= f^{-1}(t_0)$ separating $A$ from $p$. Let $A_i$ be the subset of $A$ consisting of points which can be connected to $p$ through $Y_i$ by a length minimizing geodesic. Note that $\{A_i\}_{i=1,\dots,k}$ is a finite closed cover of $A$, hence by Proposition \ref{closedCover}
    \[\diam(A) \leq \sum_{i=1}^k \diam(A_i). \]
    
    For each $x$ in $A_i$, there is a point $x_i$ such that there is a length minimizing geodesic from $x$ to $p$ containing $x_i$. Given $x,y \in A_i$ we have
    \begin{align*}
    	d(x,y) &\leq d(x,x_i) + d(x_i,y_i) + d(y_i,y) \\
        	   &= d(x,p)-d(x_i,p) + d(x_i,y_i) + d(y,p)-d(y_i,p) \\
               &\leq f(x)-f(x_i) + 2\epsilon_p + \diam(Y_i) + f(y)-f(y_i) + 2\epsilon_p \\
               &= \diam(Y_i) + 2(t - t_0 + 2\epsilon_p),
    \end{align*}
    hence
    \begin{align*}
    \diam(A) &\leq \sum_{i=1}^k (\diam(Y_i) + 2(t-t_0 + 2\epsilon_p) ) \\
    		 &\leq 2(\beta_1(X)+1)(t-t_0 + 2\epsilon_p) + \sum_{i=1}^k \diam(Y_i).
    \end{align*}
    Now we have
    \begin{align*}
    \mu^{n-1}(f^{-1}(t_0)) &\geq \sum_{i=1}^k \mu^{n-1}(Y_i) \\
    					   &\geq \sum_{i=1}^k \thick{f}\, \diam(Y_i)^{n-1} \\
                           &\geq \frac{\thick{f}}{k^{n-2}} \Bigg(\sum_{i=1}^k \diam(Y_i) \Bigg)^{n-1} \,\,\mbox{(By convexity of $\lambda \mapsto \lambda^{n-1}$ over $[0,\infty)$)} \\
                           &\geq \frac{\thick{f}}{(\beta_1(X)+1)^{n-2}} \big( \diam(A) - 2(\beta_1(X)+1)(t-t_0 + 2\epsilon_p) \big)^{n-1}.    \end{align*}
    \end{proof}
    
    \begin{prop}\label{levelMain}\footnote{Cf. \cite[Proposition 2.3]{z06}.}
    Let $A$ be a connected component of $f^{-1}(t)$. Then for any $p$ in $X$, the diameter of $A$ is less than or equal to
    \[\max \Bigg(8(\beta_1(X)+1)\,\epsilon_p, \left( \frac{2^{n+1}L(\beta_1(X)+1)^{n-1}\mu^n(X)}{\thick{f}} \right)^{\frac{1}{n}} + 8(\beta_1(X)+1)\,\big(\diam(X)\big)^{\frac{1}{n}}\,\epsilon_p^{\frac{n-1}{n}}\Bigg). \]

    \end{prop}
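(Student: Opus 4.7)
Set $k := \beta_1(X)+1$ and $D := \diam(A)$. The plan is to first reduce to the case where $D$ is comfortably larger than $\epsilon_p$, and then to apply the pointwise level-set estimate of Lemma \ref{diamSum} on a short slab below $t$ and integrate against the coarea formula (Proposition \ref{coarea}) to obtain a lower bound on $\mu^n(X)$ that can be solved for $D$. Two preliminary reductions are immediate: if $D \leq 8k\epsilon_p$ then the first entry of the maximum gives the claim and nothing remains, so I may assume $D > 8k\epsilon_p$; and if $\epsilon_p > \diam(X)$ then $D \leq \diam(X) < 8k\epsilon_p$, again contradicting the standing assumption, so I may further assume $\epsilon_p \leq \diam(X)$.

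Next I would verify $t > f(p)$ with enough room for a non-degenerate slab. From $\|f - d(p,\cdot)\|_\infty \leq \epsilon_p$ and $|f(p)| \leq \epsilon_p$ one reads off $d(p,x) \leq t + \epsilon_p$ for every $x \in A$, so $D \leq 2(t+\epsilon_p)$ and hence $t - f(p) \geq D/2 - 2\epsilon_p > 0$ whenever $D > 8k\epsilon_p$ and $k \geq 1$.

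I would then choose $\delta := (D - 4k\epsilon_p)/(2k) > 0$. This choice is tailored so that the expression $D - 2k(t - t_0 + 2\epsilon_p)$ appearing in Lemma \ref{diamSum} is non-negative on $[t-\delta, t]$ and vanishes at the left endpoint, giving a clean integral; a short calculation using $k \geq 1$ shows $\delta \leq D/2 - 2\epsilon_p \leq t - f(p)$, so the slab sits above $f(p)$ as required. Almost every $t_0 \in (t-\delta, t)$ is a regular value of $f$ by Sard's theorem, so combining Lemma \ref{diamSum} with Proposition \ref{coarea} and substituting $u = t - t_0$ yields
\begin{align*}
\mu^n(X) &\geq \frac{\thick{f}}{Lk^{n-2}}\int_0^\delta (D - 2ku - 4k\epsilon_p)^{n-1}\,du = \frac{\thick{f}\,(D - 4k\epsilon_p)^n}{2Ln\,k^{n-1}}.
\end{align*}
Since $2n \leq 2^{n+1}$ for every $n \geq 1$, this rearranges to $D - 4k\epsilon_p \leq \bigl(2^{n+1}Lk^{n-1}\mu^n(X)/\thick{f}\bigr)^{1/n}$.

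The final step converts the additive tail $4k\epsilon_p$ to the shape appearing in the proposition. The reduction $\epsilon_p \leq \diam(X)$ gives $\epsilon_p \leq (\diam X)^{1/n}\epsilon_p^{(n-1)/n}$ and hence $4k\epsilon_p \leq 8k(\diam X)^{1/n}\epsilon_p^{(n-1)/n}$; substituting back and unpacking $k = \beta_1(X)+1$ produces the required inequality. I expect the main obstacle to be bookkeeping rather than any one deep idea: keeping track of which inequalities require $k \geq 1$ versus $D > 8k\epsilon_p$, verifying that the slab $[t-\delta,t]$ really lies inside $(f(p), t]$ so that Proposition \ref{seperate} (invoked inside Lemma \ref{diamSum}) applies at every relevant $t_0$, and reconciling the natural $4k\epsilon_p$ correction with the $8k(\diam X)^{1/n}\epsilon_p^{(n-1)/n}$ form of the conclusion via the \emph{a priori} bound $\epsilon_p \leq \diam(X)$ obtained from the initial reduction.
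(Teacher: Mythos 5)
Your proof is correct and follows the paper's overall strategy: bound $\mu^{n-1}(f^{-1}(t_0))$ from below via Lemma~\ref{diamSum} on a slab $[t-\delta,t]$ below the level $t$, then integrate against the coarea inequality and solve for $\diam(A)$. Two tactical choices differ, both of which streamline the algebra. First, the paper takes $\delta=\diam(A)/\big(4(\beta_1(X)+1)\big)$ and bounds the integral crudely by (width)$\times$(min of integrand), landing on $\diam(A)\le\big(2^{n+1}L(\beta_1(X)+1)^{n-1}\mu^n(X)/(\thick{f}\diam(A))\big)^{1/(n-1)}+8(\beta_1(X)+1)\epsilon_p$; you instead tune $\delta=(D-4k\epsilon_p)/(2k)$ so the integrand $(D-2ku-4k\epsilon_p)^{n-1}$ vanishes at the left endpoint, evaluate the integral exactly, and obtain $D-4k\epsilon_p\le\big(2^{n+1}Lk^{n-1}\mu^n(X)/\thick{f}\big)^{1/n}$ directly, with no lingering $\diam(A)$ on the right. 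Second, to convert the additive $\epsilon_p$-tail into the $(\diam X)^{1/n}\epsilon_p^{(n-1)/n}$ shape, the paper multiplies through by $D^{1/(n-1)}$, uses $D\le\diam(X)$, and then invokes $(x+y)^r\le x^r+y^r$ for $r=(n-1)/n$; you avoid both steps by observing at the outset that $\epsilon_p\le\diam(X)$ (else the first entry of the maximum is already forced), so $4k\epsilon_p\le 4k(\diam X)^{1/n}\epsilon_p^{(n-1)/n}$ immediately. Both routes reach the stated bound; yours is a little tighter before the final padding from $4k$ to $8k$, and your bookkeeping of the $t-\delta>f(p)$ constraint and the regular-value set is sound.
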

    \begin{proof}
    Assume $\diam(A) > 8(\beta_1(X)+1)\,\epsilon_p$. Let $\delta= \frac{\diam(A)}{4(\beta_1(X)+1)}$. Note that $\diam(A) \leq 2\sup_{x \in A} d(x,p) \leq 2t + 2\epsilon_p.$ Then we have
    \begin{align*} 
    t - \delta &\geq \frac{\diam(A)}{2} - \epsilon_p - \frac{\diam(A)}{4(\beta_1(X)+1)} \\
    		   &\geq \frac{\diam(A)}{4} - \epsilon_p> 2(\beta_1(X)+1)\epsilon_p - \epsilon_p \geq \epsilon_p \geq f(p),
    \end{align*}
    hence for any regular value $s \in [t-\delta,t]$ we can apply Lemma \ref{diamSum} to obtain a lower bound for $\mu^{n-1}(f^{-1}(s))$. Note that for such an $s$
    \begin{align*}
    \diam(A)-2(\beta_1(X)+1)(t-s+2\epsilon_p) &\geq \diam(A)-2(\beta_1(X)+1)(\delta+2\epsilon_p) \\
    &= \frac{\diam(A)}{2} - 4(\beta_1(X)+1)\,\epsilon_p > 0,
    \end{align*}
    Hence by Lemma \ref{diamSum} we get 
    \[\mu^{n-1}(f^{-1}(s)) \geq \frac{\thick{f}}{(\beta_1(X)+1)^{n-2}} \left(\frac{\diam(A)}{2} - 4(\beta_1(X)+1)\,\epsilon_p \right)^{n-1}.\]
    
    Since almost all values of a smooth function are regular, by the coarea formula (Proposition \ref{coarea}), we get
    \[L\,\mu^n(X) \geq \int_{t-\delta}^t \mu^{n-1}(f^{-1}(s)) \,ds\geq \frac{\thick{f}\,\diam(A)}{4(\beta_1(X)+1)^{n-1}} \left(\frac{\diam(A)}{2} - 4(\beta_1(X)+1)\,\epsilon_p \right)^{n-1}.\]
    This gives us
    \[\diam(A) \leq  \left( \frac{2^{n+1}L(\beta_1(X)+1)^{n-1}\mu^n(X)}{\thick{f}\,\diam(A)} \right)^{\frac{1}{n-1}} + 8(\beta_1(X)+1)\,\epsilon_p,\]
    which implies
    \[\big(\diam(A)\big)^{\frac{n}{n-1}} \leq \left( \frac{2^{n+1}L(\beta_1(X)+1)^{n-1}\mu^n(X)}{\thick{f}} \right)^{\frac{1}{n-1}} + 8(\beta_1(X)+1)\,\big(\diam(X)\big)^{\frac{1}{n-1}}\,\epsilon_p,\]
    hence
    \[\diam(A) \leq \left( \frac{2^{n+1}L(\beta_1(X)+1)^{n-1}\mu^n(X)}{\thick{f}} \right)^{\frac{1}{n}} + 8(\beta_1(X)+1)\,\big(\diam(X)\big)^{\frac{1}{n}}\,\epsilon_p^{\frac{n-1}{n}}. \]
    Note that here we used the fact that for $x,y$ positive and $0 \leq r \leq 1$, we have $(x+y)^r \leq x^r + y^r$.
    \end{proof}
   
%%%%%%%%%%%%%%%%%

\end{document}